\documentclass[a4paper,10pt]{amsart}
\usepackage[T1]{fontenc}
\usepackage[utf8]{inputenc}
\usepackage{lmodern}
\usepackage[english]{babel}

\usepackage{amsmath}
\usepackage{amssymb}
\usepackage{amsbsy}
\usepackage{mathtools}
\usepackage{mathrsfs}
\usepackage{bookmark}
\usepackage{hyperref}
\hypersetup{
	colorlinks=true,
	linkcolor=blue,
	filecolor=magenta,      
	urlcolor=cyan,
}


\usepackage[alphabetic]{amsrefs}

\usepackage{enumitem}
\usepackage{graphicx}
\usepackage{tikz}
\usepackage{tikz-cd}
\usetikzlibrary{matrix,arrows,decorations.pathmorphing}
\usepackage[all,cmtip]{xy}

\setcounter{tocdepth}{1}


\newcommand{\mypar}{~\\\noindent\refstepcounter{subsubsection}\textbf{(\thesubsubsection)}\space}
\newcommand\arr{\ifinner\to\else\longrightarrow\fi}
\newcommand{\thmref}[1]{\hyperref[#1]{Theorem \ref{#1}}}
\newcommand{\propref}[1]{\hyperref[#1]{Proposition \ref{#1}}}
\newcommand{\lmref}[1]{\hyperref[#1]{Lemma \ref{#1}}}
\newcommand{\corref}[1]{\hyperref[#1]{Corollary \ref{#1}}}
\newcommand{\rmkref}[1]{\hyperref[#1]{Remark \ref{#1}}}
\newcommand{\dfref}[1]{\hyperref[#1]{Definition \ref{#1}}}
\newcommand{\parref}[1]{\hyperref[#1]{(\ref{#1})}}


\newenvironment{thm}{\vspace{8.0pt plus 2.0pt minus 4.0pt}\noindent\refstepcounter{subsubsection}\textbf{(\thesubsubsection) Theorem.}\itshape}{\vspace{8.0pt plus 2.0pt minus 4.0pt}}
\newenvironment{prop}{\vspace{8.0pt plus 2.0pt minus 4.0pt}\noindent\refstepcounter{subsubsection}\textbf{(\thesubsubsection) Proposition.}\itshape}{\vspace{8.0pt plus 2.0pt minus 4.0pt}}
\newenvironment{lm}{\vspace{8.0pt plus 2.0pt minus 4.0pt}\noindent\refstepcounter{subsubsection}\textbf{(\thesubsubsection) Lemma.}\itshape}{\vspace{8.0pt plus 2.0pt minus 4.0pt}}
\newenvironment{cor}{\vspace{8.0pt plus 2.0pt minus 4.0pt}\noindent\refstepcounter{subsubsection}\textbf{(\thesubsubsection) Corollary.}\itshape}{\vspace{8.0pt plus 2.0pt minus 4.0pt}}
\newenvironment{rmk}{\vspace{8.0pt plus 2.0pt minus 4.0pt}\noindent\refstepcounter{subsubsection}\textit{(\thesubsubsection) Remark.}}{\vspace{8.0pt plus 2.0pt minus 4.0pt}}

\newtheorem*{thm-no-num}{Theorem}
\newtheorem*{cor-no-num}{Corollary}


\newcommand{\A}{\mathbb A}

\newcommand{\Bcal}{\mathcal{B}}

\newcommand{\bD}{\textbf{D}}

\newcommand{\Ecal}{\mathcal E}
\newcommand{\Fcal}{\mathcal F}

\newcommand{\Gcal}{\mathcal G}
\newcommand{\Hcal}{\mathcal H}
\newcommand{\Ical}{\mathcal I}

\newcommand{\Lcal}{\mathcal L}
\newcommand{\Mcal}{\mathcal M}

\newcommand{\Ocal}{\mathcal O}

\newcommand{\Pcal}{\mathcal P}

\newcommand{\Qcal}{\mathcal Q}

\newcommand{\Tcal}{\mathcal T}
\newcommand{\Ucal}{\mathcal U}

\newcommand{\VV}{\mathbb V}

\newcommand{\Pic}{\text{\rm Pic}}
\newcommand{\GL}{\text{GL}}
\newcommand{\Sym}{\text{Sym}}

\newcommand{\pr}{\text{\rm pr}}

\newcommand{\im}{\text{Im}}

\newcommand{\wt}{\widetilde}
\newcommand{\ev}{\text{ev}}


\newcommand\PP{\mathbb{P}}

\renewcommand{\c}{\operatorname{c}}

\newcommand\ZZ{\mathbb{Z}}

\newcommand\into{\hookrightarrow}

\newcommand{\GG}{\mathbb{G}}
\newcommand{\Gras}{\operatorname{Gr}_m(W_{d})}
\newcommand{\dmn}{_{d,m}}

\begin{document}
\title[Picard group of moduli of curves of low genus in positive char.]{Picard group of moduli of curves of low genus in positive characteristic}
\author[A. Di Lorenzo]{Andrea Di Lorenzo}
\address{Scuola Normale Superiore, Piazza dei Cavalieri 7, 56126 Pisa, Italy}
\email{andrea.dilorenzo@sns.it}
\date{\today}

\begin{abstract}
	We compute the Picard group of the moduli stack of smooth curves of genus $g$ for $3\leq g\leq 5$, using methods of equivariant intersection theory. We base our proof on the computation of some relations in the integral Chow ring of certain moduli stacks of smooth complete intersections. As a byproduct, we compute the cycle classes of some divisors on $\Mcal_g$.
\end{abstract}
\maketitle

\section*{Introduction}
The Picard group is an important invariant of schemes. In the landmark paper \cite{Mum63} Mumford first introduced the notion of Picard group of a moduli functor, and actually computed it in the case of the moduli functor of elliptic curves. In subsequent works (e.g. \cite{Vis89}, \cite{Kre}) the notion of Picard group had been extended to a large class of algebraic stacks. Moreover, in \cite{Tot} and \cite{EG} the authors introduced the notion of equivariant Picard group of a scheme $X$ endowed with the action of an algebraic group $G$: they showed that the equivariant Picard group coincides with the Picard group of the associated quotient stack $[X/G]$.

In the papers \cite{Har} and \cite{AC} the authors computed the Picard group of the moduli stack of smooth curves of genus $g\geq 3$ over a base field of characteristic zero: in particular they proved that $\Pic(\Mcal_g)$ is a free abelian group generated by a single element $\lambda_1$, which is the first Chern class of the Hodge bundle. The proof of these results relies on topological techniques that seem hard to extend to the case of base fields of positive characteristic.

Over base fields of positive characteristic we know the \emph{rational} Chow ring of $\Mcal_g$ when $g\leq 6$ (see \cite{m3bar, m4, Iza, PV}) and the rational Picard group for every $g\geq 3$ (\cite{Mor}), which turns to be of rank $1$ and generated by $\lambda_1$.

Therefore, it may still be the possible that, in positive characteristic, the abelian group $\Pic(\Mcal_g)$ has more than one generator. In this paper we show that, in the range $3\leq g\leq 5$, this is not the case.
\begin{thm-no-num}
    The Picard group of $\Mcal_g$, for $3\leq g\leq 5$, is freely generated by $\lambda_1$, without any assumption on the characteristic of the base field.
\end{thm-no-num}
The proof of this theorem is obtained using methods of equivariant intersection theory  (\cite{Tot} and \cite{EG}). At the present moment equivariant intersection theory is the only tool available that can be used to say something on integral Chow rings of moduli stacks (see, among others, \cite{Vis98}, \cite{EF08}, \cite{EF09}, \cite{FulVis} and \cite{Dil18}). This approach to cycle theoretic questions has also the advantage of being almost independent of the characteristic of the base field, which is a key feature for the present work.

Furthermore, using these techniques, we are also able to determine along the way, almost without any extra effort, the cycle classes of some geometrically meaningful divisors on $\Mcal_g$  in terms of $\lambda_1$, recovering some of the computations contained in \cite{Tei} and \cite{HM}.
\begin{cor-no-num}
    Let $\Hcal_g$ (resp. $\Tcal_g$, $\Mcal_g^{\ev}$) denote the moduli stack of hyperelliptic curves (resp. of trigonal curves, of smooth curves with an even theta characteristic) of genus $g$. Then we have:
    \begin{enumerate}
        \item $[\Hcal_3]=9\lambda_1$.
        \item $[\Mcal_4^{\rm ev}]=34\lambda_1$.
        \item $[\Tcal_5]=8\lambda_1$.
    \end{enumerate}
\end{cor-no-num}
The methods used in this paper have the drawback that cannot be extended, at least in an obvious way, to moduli of curves of higher genus. Indeed, we exploit the fact that, for $3\leq g\leq 5$, the canononical model of a sufficiently general smooth curve of genus $g$ is a complete intersection in $\PP^{g-1}$.

This allows us to reduce the computation of $\Pic(\Mcal_g)$ to the computation of the Picard group of certain moduli stacks of smooth complete intersections, which we present as quotient stacks: the machinery of equivariant intersection theory is then applied to these stacks.
\subsection*{Structure of the paper}
In the first part of the paper, we focus on moduli stacks parametrizing smooth complete intersections. Some of them are birational to $\Mcal_g$ for $g=3,4,5$ and have the nice feature of being quite manageable from a computational point of view.

In Section \ref{sec:Fab} we introduce the moduli stack $\Fcal_{a,b}^n$ of smooth complete intersections in $\PP^n$ of codimension two and bidegree $(a,b)$, where $0<a<b$ and $n>2$. In \propref{prop:Fcal quot} we give a presentation of this stack as a quotient stack. Next, we use techniques of equivariant intersection theory to obtain a certain number of relations that hold in the integral Chow ring of $\Fcal_{a,b}^n$ (see \propref{prop:relation} and \rmkref{rmk:relations Fab}). This enables us to completely determine, in terms of generators and relations, the abelian group $\Pic(\Fcal_{a,b}^n)$ (see \thmref{thm:Pic(Fa,b)}).

In Section \ref{sec:Gdm} we move to the moduli stack $\Gcal_{d,m}^n$ of smooth complete intersections in $\PP^n$ of $m$ hypersurfaces of degree $d$, where $0<m<n$ and $n>1$. In \propref{prop:Gdmn quot} we give a presentation of this stack as a quotient stack and, just as in the previous section, we obtain a certain number of relations that hold in its integral Chow ring (see \propref{prop:Xdmn c_e} and \rmkref{rmk:relations Gdmn}), thus determining $\Pic(\Gcal_{d,m}^n)$.

Let us observe that \thmref{thm:Pic(Fa,b)} and \thmref{thm:pic Gdmn} can also be deduced from \cite{Ben}. Nevertheless, we preferred to give an independent treatment using different methods, which seem to us to provide simpler and shorter proofs. These techniques have also the advantage of being independent of the characteristic of the base field. On the other hand, we only recover some particular cases of \cite{Ben}, where also more detailed constructions are provided.

In Section \ref{sec:mg} we apply the results obtained in the previous sections in order to prove the main theorems of the paper, which are \thmref{thm:pic M3}, \thmref{thm:pic M4} and \thmref{thm:pic M5}. Along the way we also deduce some interesting corollaries, in particular \corref{cor:H3}, \corref{cor:M4ev} and \corref{cor:T5}.

\subsection*{Acknowledgements} We thank Angelo Vistoli for his constant support and Roberto Fringuelli for suggesting the problem of determining the Picard group of moduli of curves in positive characteristic. We also thank Shamil Asgarli and Giovanni Inchiostro for stimulating discussions on related arguments.
\section{Picard group of moduli of smooth complete intersections in $\PP^n$ of codimension two.} \label{sec:Fab}
In this section, we introduce and study the moduli stack of smooth complete intersections in $\PP^n$ of codimension two and bidegree $(a,b)$, with $a<b$. We denote this stack as $\Fcal_{a,b}^n$ and we define it in \parref{par:def Fab}: in \propref{prop:Fcal quot} we give a presentation of $\Fcal_{a,b}^n$ as a quotient stack, and in \thmref{thm:Pic(Fa,b)} we compute its Picard group. As observed in \rmkref{rmk:relations Fab}, we actually compute a set of other relations that hold true in the Chow ring of $\Fcal_{a,b}^n$.
\subsection{The moduli stack $\Fcal_{a,b}^n$.}
~\\
\mypar \label{par:def Fab} Fix three integers $a$, $b$ and $n$ such that $0<a<b$ and $n>2$ . Let $\Fcal_{a,b}^n$ be the category fibred in  groupoids over the site of schemes whose objects over a scheme $S$ are pairs $(V\to S,X\subset\PP(V))$, where
\begin{itemize}
	\item $V$ is a vector bundle over $S$ of rank $n+1$.
	\item $X$ is a closed subscheme of $\PP(V)$ of codimension $2$, smooth over $S$.
	\item For every geometric point $s$ in $S$ the fiber $X_s$ is a global complete intersection of bidegree $(a,b)$.
\end{itemize}
The morphisms in $\Fcal_{a,b}^n(S)$ between pairs $(V\to S,X\subset\PP(V))$ and $(V'\to S,X'\subset\PP(V'))$ are given by isomorphisms of the vector bundles $V$ and $V'$ which induce isomorphisms of $X$ and $X'$. It is easy to see that $\Fcal_{a,b}^n$ is a stack over the site of schemes.

\mypar \label{par:varphi} Let $E_{n+1}$ be the standard representation of $\GL_{n+1}$ and set \[W_a:=\Sym^a(E_{n+1}^\vee)\]
The projective space $\PP(W_a)$ is isomorphic to the Hilbert scheme of hypersurfaces of degree $a$ in $\PP^n$. In particular, there exists a universal hypersurface $\pi: H_a\arr \PP(W_a)$. This hypersurface is embedded in $\PP(W_a)\times\PP^n$, thus we can consider the restriction to $H_a$ of the invertible sheaf $\pr_2^*\Ocal_{\PP^n}(b)$, which we denote $\Ocal_{H_a}(b)$. Define $V_{a,b}:=\pi_*\Ocal_{H_a}(b)$: by cohomology and base change theorem, we have that $V_{a,b}$ is a locally free sheaf whose fibre over a closed point $[H]$ of $\PP(W_a)$ is the vector space $H^0(H,\Ocal_H(b))$.

Another way to look at $V_{a,b}$ is the following. Consider the morphism of $\GL_{n+1}$-representations:
\[ \varphi: W_a\otimes W_{b-a} \arr W_a\otimes W_b,\hspace{5pt} (f,g)\longmapsto (f,fg) \]
This induces an injective morphism of locally free sheaves over $\PP(W_a)$:
\[ W_{b-a}\otimes\Ocal_{\PP(W_a)}(-1)\arr W_b\otimes\Ocal_{\PP(W_a)}\]
whose cokernel is $V_{a,b}$. This locally free sheaf naturally inherits a $\GL_{n+1}$-linearization. 

\mypar \label{par:def X} Consider the two families of hypersurfaces $H_a\times\PP(W_b)$ and $\PP(W_a)\times H_b$ over $\PP(W_a)\times\PP(W_b)$ and denote $X_{a,b}$ their schematic intersection. There exists an open subscheme $U'_{a,b}\subset\PP(W_a)\times\PP(W_b)$ such that all the geometric fibres of the relative scheme $X_{a,b}|_{U'_{a,b}}\arr U'_{a,b}$ are global complete intersections of bidegree $(a,b)$.
Indeed, observe that the morphism $\varphi$ of \parref{par:varphi} induces an embedding of projective bundles:
\[\overline{\varphi}:\PP(W_a)\times\PP(W_{b-a})\into \PP(W_a)\times\PP(W_b) \]
We see that the locus in $\PP(W_a)\times\PP(W_b)$ where the fibres of the schematic intersection of $H_a\times\PP(W_b)$ and $\PP(W_a)\times H_b$ have codimension one is precisely $\im(\overline{\varphi})$.

\mypar\label{par:projection morphism} 
The induced projection morphism $\PP(W_a)\times\PP(W_b)\setminus\im(\overline{\varphi})\arr \PP(V_{a,b})$ makes the first scheme into an affine bundle over the second one. The restriction of the relative scheme $X_{a,b}$ to $\PP(W_a)\times\PP(W_b)\setminus\im(\overline{\varphi})$ descends along the projection to a relative scheme $\pi:C_{a,b}\arr \PP(V_{a,b})$. The fibres of this morphism
are by construction global complete intersections of bidegree $(a,b)$. 

\mypar\label{par:D} Let $D_{a,b}$ denotes the closed subscheme of $\PP(V_{a,b})$ where the fibres of the family of complete intersections $\pi:C_{a,b}\arr \PP(V_{a,b})$ are not smooth. The subscheme $D_{a,b}$ is reduced, irreducible and has codimension one. This can be seen as follows: consider the maximal open subscheme of $C_{a,b}$ where the sheaf of relative differentials $\Omega^1_{\pi}$ has rank $n-2$, and let $C^{\rm sing}_{a,b}$ be its complement. In other terms $C_{a,b}^{\rm sing}$ is the degeneracy locus of the sheaf $\Omega_{\pi}^1$.

Consider the  projection of $C_{a,b}^{\rm sing}$ onto $\PP^n$: the fibre $C_{a,b}^{\rm sing}(p)$ of $C_{a,b}^{\rm sing}$ on a geometric point $p$ of $\PP^n$ corresponds to the scheme parametrising complete intersections that have a singularity in $p$.

We claim that $C_{a,b}^{\rm sing}(p)$ is reduced and irreducible. It is enough to show that this holds for the fibre over $p_0=[1:0:\ldots:0]$, as all the fibres are isomorphic. Recall from \parref{par:projection morphism} that $\PP(W_a)\times\PP(W_b)\setminus\im(\overline{\varphi})$ is an affine bundle over $\PP(V_{a,b})$, hence we can equivalently show that the closure of the preimage of $C_{a,b}^{\rm sing}(p_0)\subset\PP(V_{a,b})$ in $\PP(W_a)\times\PP(W_b)$ is reduced and irreducible. 

This latter scheme is easy to describe. Let $[c_0:\ldots:c_n:\ldots]$ denote the point in $\PP(W_a)$ corresponding to the form $\sum_{i=0}^{n} c_iX_0^{a-1}X_i + \cdots$, and let $[d_0:\ldots:d_n:\ldots]$ denote the point in $\PP(W_b)$ associated to the form $\sum_{i=0}^{n} d_iX_0^{b-1}X_i+\cdots$ (the other terms are not relevant for our discussion).

The preimage of $C_{a,b}^{\rm sing}(p_0)$ can be described as the locus in $\PP(W_a)\times\PP(W_b)$ where
\[ c_0=0,\quad d_0=0,\quad {\rm rk}\begin{pmatrix*} c_1 & d_1 \\ \vdots & \vdots \\ c_n & d_n \end{pmatrix*}\leq 1 \Longleftrightarrow c_id_j-c_jd_i=0 .\]
It is straightforward to verify that these equations define an irreducible and reduced subscheme of $\PP(W_a)\times\PP(W_b)$.

Hence, we have proved that the fibres of $C_{a,b}^{\rm sing}\arr \PP^n$ are reduced and irreducible: as $\PP^n$ is irreducible and the morphism is proper and surjective, we can conclude that $C_{a,b}^{\rm sing}$ is irreducible and reduced. Moreover, this also shows that the fibres of $C_{a,b}^{\rm sing}\arr \PP^n$ have codimension $n+1$ in $\PP(V_{a,b})$, thus the codimension of $C_{a,b}^{\rm sing}$ in $\PP(V_{a,b})\times\PP^n$ is $n+1$: combining these observation with the fact that the morphism $C_{a,b}^{\rm sing}\arr D_{a,b}$ is generically finite, we deduce that $D_{a,b}$ is irreducible, reduced and has codimension one.

\begin{prop}\label{prop:Fcal quot}
	We have $\Fcal_{a,b}^n\simeq \left[\left(\PP(V_{a,b})\setminus D_{a,b}\right)/\GL_{n+1}\right]$.
\end{prop}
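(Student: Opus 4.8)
The plan is to write down the tautological morphism
$\Phi\colon\bigl[(\PP(V_{a,b})\setminus D_{a,b})/\GL_{n+1}\bigr]\to\Fcal_{a,b}^n$
and prove it is an equivalence. Write $U:=\PP(V_{a,b})\setminus D_{a,b}$. By construction (\parref{par:projection morphism}) the restricted family $C_{a,b}|_U\to U$ has geometric fibres that are global complete intersections of bidegree $(a,b)$ in $\PP^n$, and by the definition of $D_{a,b}$ (\parref{par:D}) those fibres are smooth; together with the trivial rank $n+1$ bundle $E_{n+1}\otimes\Ocal_U$, whose projectivisation is the ambient $\PP^n$, this gives an object of $\Fcal_{a,b}^n(U)$. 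Since $W_a$, $W_b$, $V_{a,b}$, $\PP(V_{a,b})$, $D_{a,b}$, $C_{a,b}$ and $E_{n+1}$ all carry compatible $\GL_{n+1}$-linearisations, this object is $\GL_{n+1}$-equivariant and hence descends to the quotient stack, defining $\Phi$.

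To invert $\Phi$ I would exploit the morphism $\Fcal_{a,b}^n\to B\GL_{n+1}$ that forgets $X$ and remembers only $V$. A general property of this morphism is that $\Fcal_{a,b}^n\simeq[\Zcal/\GL_{n+1}]$ with $\Zcal:=\Fcal_{a,b}^n\times_{B\GL_{n+1}}\Spec k$, and unwinding the fibre product shows that $\Zcal$ is the functor sending a scheme $S$ to the \emph{set} of closed subschemes $X\subset\PP^n_S$, smooth over $S$, whose geometric fibres are global complete intersections of bidegree $(a,b)$, with $\GL_{n+1}$ acting by change of homogeneous coordinates. It therefore suffices to exhibit a $\GL_{n+1}$-equivariant isomorphism $\Zcal\simeq U$ compatible with the universal families. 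One direction is the tautological map $U\to\Zcal$ classifying $C_{a,b}|_U$. For the other, given $X\subset\PP^n_S$ in $\Zcal(S)$ with projection $\pi\colon\PP^n_S\to S$: because $a<b$, on each geometric fibre the degree-$a$ part of the homogeneous ideal of $X_s$ is one-dimensional, and because $n>2$ the Koszul complex gives $\H^1(\PP^n,\mathcal{I}_{X_s}(a))=0$; so cohomology and base change turns $\pi_*\mathcal{I}_X(a)$ into a line subbundle of $\pi_*\Ocal_{\PP^n_S}(a)=W_a\otimes\Ocal_S$, i.e. a morphism $S\to\PP(W_a)$. Pulling back $H_a$ along it gives the unique hypersurface $\wt H\supset X$ of degree $a$; since each $X_s=V(f_a)\cap V(f_b)$ is a complete intersection, $X\hookrightarrow\wt H$ is a relative effective Cartier divisor restricting to $\Ocal(b)$ on every fibre, so it is cut out by a section defining a line subbundle of $\pi_*\Ocal_{\wt H}(b)$, which by the construction of $V_{a,b}$ in \parref{par:varphi} is the pullback of $V_{a,b}$. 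This yields the sought lift $S\to\PP(V_{a,b})$, which lands in $U$ exactly because $X$ is fibrewise smooth. Unwinding \parref{par:projection morphism} one checks this recipe is $\GL_{n+1}$-equivariant and two-sided inverse to $U\to\Zcal$, so that $\Fcal_{a,b}^n\simeq[\Zcal/\GL_{n+1}]\simeq[U/\GL_{n+1}]$.

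I expect the main obstacle to be the representability assertion $\Zcal\simeq U$, and within it the two cohomology-and-base-change steps: that $\pi_*\mathcal{I}_X(a)$, and the sheaf on $S$ cutting $X$ out of $\wt H$, are genuine line subbundles in families rather than merely fibrewise one-dimensional. This is precisely where the standing hypotheses enter — $a<b$ ensures that through each fibre there passes a \emph{unique} degree-$a$ hypersurface and that $X$ is a divisor in it (not conversely), while $n>2$ kills the higher cohomology in the Koszul complex. The remaining ingredients — the general $[\Zcal/\GL_{n+1}]$ description, the compatibility of the various $\GL_{n+1}$-linearisations, and the bookkeeping of twists needed to see that $\Phi$ and the map just built are mutually inverse — are routine.
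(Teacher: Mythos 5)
Your proposal is correct and follows essentially the same route as the paper: the paper's $\wt{\Fcal}_{a,b}^n$ (objects of $\Fcal_{a,b}^n$ equipped with a trivialisation of $V$) is exactly your $\Zcal$, and both arguments construct the inverse by first recovering the unique degree-$a$ hypersurface through $X$ via cohomology and base change applied to $\pr_{1*}(\Ical_X\otimes\pr_2^*\Ocal(a))$, and then the point of $\PP(V_{a,b})$ from the line subbundle of $\pr_{1*}\Ocal_{H_a}(b)\simeq f^*V_{a,b}$ cutting $X$ out of that hypersurface. The only difference is presentational (you phrase the torsor as a fibre product over $B\GL_{n+1}$ and make the roles of $a<b$ and $n>2$ explicit), not mathematical.
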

\begin{proof}
	Consider the stack $\wt{\Fcal}_{a,b}^n$ whose objects over a scheme $S$ are triples $(V\arr S,X\subset\PP(V),\alpha)$ where $(V\arr S,X\subset\PP(V))$ is an object of $\Fcal_{a,b}^n$ and $\alpha$ is an isomorphism between $V$ and $\A_S^{n+1}$. The obvious forgetful functor makes $\wt{\Fcal}_{a,b}^n$ into a $\GL_{n+1}$-torsor over $\Fcal_{a,b}^n$. We want to show that $\wt{\Fcal}_{a,b}^n\simeq \PP(V_{a,b})\setminus D_{a,b}$.
	
	The stack $\wt{\Fcal}_{a,b}^n$ is equivalent to the stack whose objects are subschemes $X\subset \PP^n_S$ such that the projection $X\to S$ is smooth and for every geometric point $s$ in $S$ the fibre $X_s\subset \PP^n_{k(s)}$ is a global complete intersection of bidegree $(a,b)$. The family
	\[C_{a,b}|_{\PP(V_{a,b})\setminus D_{a,b}}\arr \PP(V_{a,b})\setminus D_{a,b}\]
	induces a morphism $\PP(V_{a,b})\setminus D_{a,b}\arr \wt{\Fcal}_{a,b}^n$. 
	
	To construct its inverse, observe that given an object $(X\subset\PP^n_T)$ of $\wt{\Fcal}_{a,b}^n(T)$, there is a unique hypersurface $H_a$ that contains $X$: by cohomology and base change, we see that $R^1\pr_{1*}(\Ical_X\otimes\pr_2^*\Ocal(a))=0$, where $\Ical_X$ is the sheaf of ideals of $X$. Therefore, the following sequence of $\Ocal_T$-modules is exact:
	\[ 0\arr \pr_{1*}(\Ical_X \otimes \pr_2^*\Ocal(a) \arr \pr_{1*}\pr_2^*\Ocal(a) \arr \pr_{1*}(\Ocal_X\otimes\pr_2^*\Ocal(a)) \arr 0 \]
	and moreover $\pr_{1*}(\Ical_X \otimes \pr_2^*\Ocal(a))$ is an invertible sheaf. After possibly passing to a cover of $T$, we can then trivialize $\pr_{1*}(\Ical_X \otimes \pr_2^*\Ocal(a))$, obtaining in this way an injective morphism $\Ocal_T\arr W_a\otimes\Ocal_{\PP^n_T}$: the corresponding hypersurface $H_a$ obviously contains $X$, and its uniqueness follows from simple considerations on the bidegree.
	
	We have thus defined a morphism $f:T\arr \PP(W_a)$. By construction there is a well defined injective morphism:
	\[\Ical_X(b)|_{H_a}\arr \Ocal_{H_a}(b)\]
	By functoriality of the pushforward we get an injective morphism:
	\[\Lcal:=\pr_{1*}(\Ical_X(b)|_{H_a})\arr \pr_{1*}\Ocal_{H_a}(b)\]
	Observe that the sheaf on the right is isomorphic to $f^*V_{a,b}$ and the sheaf on the left is actually an invertible sheaf by the usual arguments involving cohomology and base change theorem. Thus the morphism above yields a morphism $T\arr \PP(V_{a,b})$. As everything is functorial, we have constructed a morphism $\wt{\Fcal}_{a,b}^n\arr \PP(V_{a,b})$. Moreover, the hypotheses on the objects of $X$ assure us that this morphism factorizes through $\PP(V_{a,b})\setminus D_{a,b}$. It is easy to see that the two morphisms that we have constructed are one the inverse of the other.
\end{proof}
\subsection{The Picard group of $\Fcal_{a,b}^n$}
~\\\\
\noindent From \cite{EG} we know that the Picard group of a quotient stack $[X/G]$ is equal to the $G$-equivariant Picard group of $X$. Therefore \propref{prop:Fcal quot} implies the following corollary:

\begin{cor}\label{cor:pic}
	$\Pic(\Fcal_{a,b}^n)\simeq \Pic^{\GL_{n+1}}(\PP(V_{a,b})\setminus D_{a,b})$.
\end{cor}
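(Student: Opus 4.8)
The plan is to deduce the statement directly by combining the quotient presentation of \propref{prop:Fcal quot} with the comparison between Picard groups of quotient stacks and equivariant Picard groups. First I would invoke \propref{prop:Fcal quot}, which gives the equivalence $\Fcal_{a,b}^n\simeq\left[(\PP(V_{a,b})\setminus D_{a,b})/\GL_{n+1}\right]$ of algebraic stacks; since isomorphic stacks have isomorphic Picard groups, this already reduces the problem to computing $\Pic\!\left(\left[(\PP(V_{a,b})\setminus D_{a,b})/\GL_{n+1}\right]\right)$. Then I would appeal to the result of \cite{EG} (building on \cite{Tot}): for a linear algebraic group $G$ acting on a scheme $X$ that admits a $G$-equivariant ample line bundle --- in particular for any quasi-projective $X$ with a $G$-action --- there is a canonical isomorphism $\Pic([X/G])\simeq\Pic^G(X)$, where $\Pic^G(X)$ denotes the group of isomorphism classes of $G$-linearized invertible sheaves on $X$.

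The hypotheses of that theorem are satisfied here: $\PP(V_{a,b})\setminus D_{a,b}$ is an open subscheme of the projective bundle $\PP(V_{a,b})$ over $\PP(W_a)$, hence is quasi-projective, and $\GL_{n+1}$ is a linear algebraic group acting on it compatibly with the $\GL_{n+1}$-linearization of $V_{a,b}$ recalled in \parref{par:varphi}. Combining the two steps gives
\[
\Pic(\Fcal_{a,b}^n)\simeq\Pic\!\left(\left[(\PP(V_{a,b})\setminus D_{a,b})/\GL_{n+1}\right]\right)\simeq\Pic^{\GL_{n+1}}(\PP(V_{a,b})\setminus D_{a,b}),
\]
which is the asserted isomorphism. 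I do not expect any real obstacle: the statement is a formal corollary of \propref{prop:Fcal quot} together with a structural theorem already in the literature, and all the genuine work is deferred to the explicit evaluation of the right-hand side by equivariant intersection theory, carried out in \thmref{thm:Pic(Fa,b)}.
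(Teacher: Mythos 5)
Your proposal is correct and is essentially identical to the paper's argument: the paper likewise just combines \propref{prop:Fcal quot} with the identification from \cite{EG} of the Picard group of a quotient stack $[X/G]$ with the $G$-equivariant Picard group of $X$. The extra verification of quasi-projectivity you include is harmless and only makes explicit what the paper leaves implicit.
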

\mypar \label{par:equivariant picard} It is well known that $\Pic^{\GL_{n+1}}(\PP(V_{a,b}))$ is a free abelian group on three generators, namely the first Chern class $c_1$ of the standard representation of $\GL_{n+1}$, the hyperplane section $u$ of $\PP(W_a)$ and the hyperplane section $v$ of $\PP(V_{a,b})$ regarded as a projective bundle over $\PP(W_a)$. So we get:
\[ \Pic^{\GL_{n+1}}(\PP(V_{a,b})\setminus D_{a,b})\simeq \ZZ\langle c_1,u,v\rangle/\langle [D_{a,b}] \rangle \]
where $[D_{a,b}]$ denotes the equivariant cycle class of $D_{a,b}$. In the following, we will indicate as $\Ocal_{\PP(V_{a,b})}(-1)$ the tautological sheaf of $\PP(V_{a,b})$, and $\Ocal_{\PP(W_a)}(-1)$ the pullback to $\PP(V_{a,b})$ of the tautological sheaf of $\PP(W_a)$.


Let $\pi:C_{a,b}\arr \PP(V_{a,b})$ be as in \parref{par:projection morphism}, and take $C_{a,b}^{\rm sing}$ to be the degeneracy locus of the sheaf of relative differentials $\Omega^1_\pi$: it follows from the local description of the singularity that is obtained in \cite{SGA}*{Exp. XV, Th. 1.2.6} that the $(n-3)^{\rm th}$ Fitting ideal of $\Omega^1_{\pi}$ is radical, thus $C_{a,b}^{\rm sing}$ is reduced. Alternatively, to prove this claim, one can use the arguments of \parref{par:D}.

The closed subscheme $C_{a,b}^{\rm sing}$ is $\GL_{n+1}$-invariant and it is birational to $\pi(C_{a,b}^{\rm sing})$, which is equal to $D_{a,b}$. This shows that, in order to compute $[D_{a,b}]$, we can equivalently compute the cycle class $[C_{a,b}^{\rm sing}]$ in the equivariant Chow ring of $C_{a,b}$ and then take its pushforward along the projection on $\PP(V_{a,b})$.

\begin{lm}\label{lm:product}
	Let $\pr_1:\PP(V_{a,b})\times\PP^n\arr \PP(V_{a,b})$ be the projection morphism. Set $\Ecal_{a,b}:=(\pr_1^*\Ocal_{\PP(W_a)}(-1)\otimes\pr_2^*\Ocal_{\PP^n}(-a))\oplus(\pr_1^*\Ocal_{\PP(V_{a,b})}(-1)\otimes\pr_2^*(\Ocal_{\PP^n}(-b))$. Then we have:
	\[\pi_*[C_{a,b}^{\rm sing}]=\pr_{1*}([C_{a,b}]\cdot c_{n-1}^{\GL_{n+1}}([\pr_2^*\Omega^1_{\PP^n}]-[\Ecal_{a,b}]))\]
\end{lm}
\begin{proof} Consider the exact sequence:
\[ \pi^*\Omega^1_{\PP(V_{a,b})}\xrightarrow{\phi} \Omega^1_{C_{a,b}} \arr \Omega^1_{\pi} \arr 0 \]
The morphism $\phi$ has generically rank $d=\dim(\PP(V_{a,b}))$, and the sequence above implies that the degeneracy locus $\bD_{d-1}(\phi)$ coincides with $C_{a,b}^{\rm sing}$. Observe that the codimension of $C_{a,b}^{\rm sing}$ is equal to the expected codimension of $\bD_{d-1}(\phi)$. We can apply the equivariant version of Thom-Porteous formula (see \cite{Ful}*{Sec. 14.4}) which tells us that:
\[ [C_{a,b}^{\rm sing}]=[\bD_{d-1}(\phi)]=c_{n-1}^{\GL_{n+1}}([\Omega^1_{C_{a,b}}]-[\pi^*\Omega^1_{\PP(V_{a,b})}]) \]
If $i:C_{a,b}\hookrightarrow \PP(V_{a,b})\times\PP^n$ denotes the closed embedding, which is regular, then we have:
\[0\arr i^*\Ical_{C_{a,b}}\arr i^*\Omega^1_{\PP(V_{a,b})\times\PP^n}\arr \Omega^1_{C_{a,b}}\arr 0\]
From this we see that:
\[ [\Omega^1_{C_{a,b}}]=i^*([\Omega^1_{\PP(V_{a,b})\times\PP^n}]-[\Ical_{C_{a,b}}]) \]
inside $K_0^{\GL_{n+1}}(C_{a,b})$. We also know by construction that $i^*\Ical_{C_{a,b}}=i^*\Ecal_{a,b}$.

We have the obvious identity:
\[ [\pi^*\Omega^1_{\PP(V_{a,b})}]=i^*[\pr_1^*\Omega^1_{\PP(V_{a,b})}] \]
Recall that $\Omega^1_{\PP(V_{a,b})\times\PP^n}\simeq \pr_1^*\Omega^1_{\PP(V_{a,b})}\oplus\pr_2^*\Omega^1_{\PP^n}$. Consequently:
\[ [\Omega^1_{\PP(V_{a,b})\times\PP^n}]-[\pr_1^*\Omega^1_{\PP(V_{a,b})}]=[\pr_2^*\Omega^1_{\PP^n}] \]
Putting all together, we deduce:
\begin{align*}
\pi_*c_{n-1}^{\GL_{n+1}}([\Omega^1_{C_{a,b}}]-[\pi^*\Omega^1_{\PP(V_{a,b})}])&=\pi_*c_{n-1}^{\GL_{n+1}}(i^*[\pr_2^*\Omega^1_{\PP^n}]-i^*[\Ical_{C_{a,b}}])\\
&=\pr_{1*}i_*i^*c_{n-1}^{\GL_{n+1}}([\pr_2^*\Omega^1_{\PP^n}]-[\Ecal_{a,b}])\\
&=\pr_{1*}([C_{a,b}]\cdot c_{n-1}^{\GL_{n+1}}([\pr_2^*\Omega^1_{\PP^n}]-[\Ecal_{a,b}]))
\end{align*}
and we are done.
\end{proof}

\begin{lm}\label{lm:class Xa,b}
	Let $u$ (resp. $v$, $t$) be the pullback to $U_{a,b}$ of the hyperplane section of $\PP(W_a)$ (resp. $\PP(V_{a,b})$, $\PP^n$). Then $[C_{a,b}]=(u+at)(v+bt)$.	
\end{lm}
\begin{proof}
	Observe that $C_{a,b}$ is the complete intersections of two independent global sections of the locally free sheaf \[(\pr_1^*\Ocal_{\PP(W_a)}(1)\otimes \pr_2^*\Ocal_{\PP^n}(a))\oplus(\pr_1^*\Ocal_{\PP(V_{a,b})}(1)\otimes \pr_2^*\Ocal_{\PP^n}(b))\]
	This implies that $[C_{a,b}]$ is equal to the top Chern class of the locally free sheaf above. After a straightforward computation, we get the desired conclusion.
\end{proof}
\begin{lm}\label{lm:c2}
	Set $\Ecal_{a,b}$ as in \lmref{lm:product} and let $\ell_1,\ldots,\ell_{n+1}$ be the Chern roots of the standard $\GL_{n+1}$-representation. Then:
	\begin{align*}
	c_{n-1}^{\GL_{n+1}}([\pr_2^*\Omega^1_{\PP^n}]-[\Ecal_{a,b}])&=\left\{\prod_{i=1}^{n+1}(1-\ell_i-t) \cdot \sum_{j=0}^{\infty} a^jt^j \cdot \sum_{k=0}^{\infty} b^kt^k\right\}_{n-1}
	\end{align*}
\end{lm}
\begin{proof}
	Let $E_{n+1}$ be the standard representation of $\GL_{n+1}$. Then the Euler exact sequence for $\PP^n=\PP(E_{n+1})$ is:
	\[ 0\arr \Omega^1_{\PP^n}\arr E_{n+1}^{\vee}\otimes\Ocal(-1))\arr \Ocal \arr 0 \]
    Thus 
    \[c^{\GL_{n+1}}(\Omega^1_{\PP^n})=c^{\GL_{n+1}}(E_{n+1}^\vee\otimes\Ocal(-1))=\prod_{i=1}^{n+1}(1-\ell_i-t)\]
    where $\ell_1,\ldots,\ell_{n+1}$ are the Chern roots of $E_{n+1}$: as the expression above is symmetric in the $\ell_i$ it would be possible to rewrite it in terms of the usual Chern classes $c_1,\ldots,c_{n+1}$. On the other hand, the form above is more suitable for doing computations.
    
	We also have:
	\[ c^{\GL_{n+1}}(\Ecal_{a,b})=(1-(u+at))(1-(v+bt)) \]
	From this we deduce:
	\begin{align*}
	&c^{\GL_{n+1}}([\pr_2^*\Omega^1_{\PP^n}]-[\Ecal_{a,b}])=\pr^*_2c^{\GL_{n+1}}(\Omega^1_{\PP^n})\cdot c^{\GL_{n+1}}(\Ecal_{a,b})^{-1}\\&= c^{\GL_{n+1}}(\Omega^1_{\PP^n})\cdot (1+(u+at)+(u+at)^2+\dots)\cdot (1+(v+at)+(v+at)^2+\dots) 
	\end{align*}
	Expanding the expression above and taking the degree $n-1$ part, we get the desired conclusion.
\end{proof}	

\begin{prop}\label{prop:relation}
	We have:
	\begin{align*}
	\pi_*[C^{\rm sing}_{a,b}]=&\left( \sum (-1)^k\binom{n}{k}a^{i+1}b^{j+1} - \sum (-1)^k\binom{n+1}{k}a^ib^j \right)c_1\\
	&+\left( \sum (-1)^k\binom{n+1}{k}a^ib^{j+1} (1+(i+1)a)  \right)u\\
	&+\left( \sum (-1)^k\binom{n+1}{k}a^{i+1}b^j (1+(j+1)b) \right)v
	\end{align*}
\end{prop}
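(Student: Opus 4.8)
The plan is to feed the three preceding lemmas into one another and then carry out one coefficient extraction. Combining \lmref{lm:product} with \lmref{lm:class Xa,b}, and abbreviating $c_{n-1}=c_{n-1}^{\GL_{n+1}}([\pr_2^*\Omega^1_{\PP^n}]-[\Ecal_{a,b}])$, the class to compute is $\pr_{1*}\bigl((u+at)(v+bt)\,c_{n-1}\bigr)$. For $c_{n-1}$ I would use the form of \lmref{lm:c2} exhibited in its proof, namely the degree-$(n-1)$ part of $\prod_{i=1}^{n+1}(1-\ell_i-t)$ multiplied by the inverse total equivariant Chern class of $\Ecal_{a,b}$, that is, by $\bigl(1-(u+at)\bigr)^{-1}\bigl(1-(v+bt)\bigr)^{-1}$.

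Next I would make the pushforward $\pr_{1*}$ along the $\PP^n$-factor explicit, being careful that $\PP^n=\PP(E_{n+1})$ carries the tautological $\GL_{n+1}$-action: the equivariant Chow ring of $\PP(V_{a,b})\times\PP^n$ is free over that of $\PP(V_{a,b})$ on $1,t,\dots,t^n$, but the relation in top degree is $\prod_{i=1}^{n+1}(t+\ell_i)=0$ coming from the tautological inclusion $\Ocal_{\PP^n}(-1)\hookrightarrow E_{n+1}$, i.e. $t^{n+1}=-c_1 t^{n}-c_2 t^{n-1}-\cdots$ with $c_i=c_i(E_{n+1})$. Hence $\pr_{1*}$ sends $t^{n}$ to $1$, every smaller power of $t$ to $0$, and $t^{n+1}$ to $-c_1$ (the first equivariant Segre class). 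Since the integrand has $t$-degree at most $n+1$, the answer is the coefficient of $t^{n}$ minus $c_1$ times the coefficient of $t^{n+1}$; it is this last contribution that accounts for the second summand in the $c_1$-coefficient, and it is the step most easily overlooked.

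Because the result is a divisor class, only monomials at most linear in $u$, $v$ and the Chern roots $\ell_1,\dots,\ell_{n+1}$ can survive, so I would work modulo the ideal of quadratic monomials in these variables. Under this reduction $\prod_{i=1}^{n+1}(1-\ell_i-t)$ becomes $(1-t)^{n+1}-c_1(1-t)^{n}$ with $c_1=\ell_1+\cdots+\ell_{n+1}$, while $\bigl(1-(u+at)\bigr)^{-1}$ becomes $(1-at)^{-1}+u(1-at)^{-2}$, symmetrically in $(v,b)$. Writing $(u+at)(v+bt)=abt^{2}+(av+bu)t+uv$ and multiplying by $c_{n-1}$: the $uv$-term drops; the coefficient of $t^{n+1}$ is $ab$ times the coefficient of $t^{n-1}$ in $(1-t)^{n+1}/\bigl((1-at)(1-bt)\bigr)$; and the coefficient of $t^{n}$ is $(av+bu)$ times that same coefficient of $t^{n-1}$, plus $ab$ times the coefficient of $t^{n-2}$ in
\[
-c_1\frac{(1-t)^{n}}{(1-at)(1-bt)}+u\frac{(1-t)^{n+1}}{(1-at)^{2}(1-bt)}+v\frac{(1-t)^{n+1}}{(1-at)(1-bt)^{2}}.
\]
Expanding $(1-t)^{n}$, $(1-t)^{n+1}$ by the binomial theorem, $(1-at)^{-1}=\sum_j a^jt^j$, $(1-at)^{-2}=\sum_j(j+1)a^jt^j$ and the analogous series in $b$, and then collecting the coefficients of $c_1$, $u$, $v$, one obtains the three triple sums: the $\binom{n}{k}$'s come from the $c_1$-term (where one factor of $1-t$ has cancelled), the $\binom{n+1}{k}$'s from the $u$- and $v$-terms and from the Segre contribution, and the weights $i+1$, $j+1$ from the squared denominators $(1-at)^{-2}$, $(1-bt)^{-2}$.

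The main obstacles are thus (i) not forgetting the equivariant Segre term $\pr_{1*}(t^{n+1})=-c_1$, and (ii) the combinatorial bookkeeping — keeping straight which power of $t$ is paired with $ab$, with $av+bu$ or with $uv$, tracking the index shifts produced by the extra factors of $a$ and $b$ in $(u+at)(v+bt)$, and checking that every monomial quadratic in $u,v,\ell_\bullet$ really cancels from the relevant coefficients. Once Lemmas~\ref{lm:product}, \ref{lm:class Xa,b} and \ref{lm:c2} are in place, everything else is forced.
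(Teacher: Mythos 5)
Your strategy coincides with the paper's own proof: both feed \lmref{lm:product}, \lmref{lm:class Xa,b} and \lmref{lm:c2} into $\pr_{1*}\bigl((u+at)(v+bt)\,c_{n-1}^{\GL_{n+1}}([\pr_2^*\Omega^1_{\PP^n}]-[\Ecal_{a,b}])\bigr)$, extract the coefficients of $t^{n}$ and $t^{n+1}$, and apply the projective-bundle relation $t^{n+1}=-(c_1t^n+\cdots+c_{n+1})$ — so the answer is $\xi_n-c_1\xi_{n+1}$, which is exactly the paper's $(av+bu)F_0+ab(F_1-c_1F_0)$ and the source of the extra $c_1$-term you rightly flag as easy to overlook. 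Your reduction modulo quadratic monomials in $u$, $v$, $\ell_\bullet$ is precisely the paper's evaluation of $F_0$ at zero and linearization of $F_1$ via partial derivatives, so the proposal is correct and essentially identical to the published argument.
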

\begin{proof}
	Recall from \parref{par:D} that $C_{a,b}^{\rm sing}$ has codimension $n+1$ in $\PP(V_{a,b})\times\PP^n$, hence there is a unique way of writing it as
	\[ i_*[C_{a,b}^{\rm sing}]= \xi_1 t^n + \xi_2 t^{n-1} +\cdots + \xi_{n+1} \]
	Then $\pi_*[C_{a,b}^{\rm sing}]=\pr_{1*}i_*[C_{a,b}^{\rm sing}]=\xi_1$. 
	
	 \lmref{lm:class Xa,b} and \lmref{lm:c2} give us an almost explicit expression for $i_*[C_{a,b}^{\rm sing}]$ in the equivariant Chow ring of $\PP(V_{a,b})\times\PP^n$, which is the following:
	 \begin{equation}\label{eq:1}
	 i_*[C_{a,b}^{\rm sing}] = (u+at)(v+bt)\cdot F(u,v,t,\underline{\ell})
	 \end{equation}
	 where
	 \begin{equation}\label{eq:2}
	  F(u,v,t,\underline{\ell}) := \left\{\prod_{i=1}^{n+1}(1-\ell_i-t) \cdot \sum_{j=0}^{\infty} a^jt^j \cdot \sum_{k=0}^{\infty} b^kt^k\right\}_{n-1}
	 \end{equation}
	 Let $F_{n-1-i}(u,v,\underline{\ell})$ denote the coefficient of $t^i$ in $F(u,v,\underline{\ell})$. We can use the relation
	 \[ t^{n+1} = -(c_{n+1}+c_{n}t+c_{n-1}t^2+c_1t^n)\]
	 to put (\ref{eq:1}) in its canonical form, from which we see that the coefficient in front of $t^n$ is
	 \[	 \xi_1 = (av+bu)F_0 + ab (F_1 - c_1F_0) \]
	 An explicit expression for $F_0$ can be obtained by evaluating (\ref{eq:2}) in $u=v=\ell_i=0$. We get:
	 \begin{align*}
	 F_0 &= \left\{ (1-t)^{n+1} \cdot \sum_{i=0}^{\infty} a^it^i \cdot \sum_{j=0}^{\infty} b^j t^j \right\}_{n-1} = \sum (-1)^k\binom{n+1}{k}a^ib^j 
	 \end{align*} 
	 where the last sum is taken over the triples $(i,j,k)$ such that $i,j,k\geq 0$ and $i+j+k=n-1$.
	 
	 Also $F_1$ can be computed with almost the same trick, because:
	 \[ F_1 = (\partial_u F)(0) u  + (\partial_v F)(0)v + (\partial_{\ell_1} F)(0)\left(\sum \ell_i\right) \]
	 After some computation and after having substitued $\sum \ell_i=c_1$, we get:
	 \begin{align*}
	 F_1 &= \left( \sum (-1)^k\binom{n+1}{k}(i+1)a^ib^j  \right)u \\
	 &+ \left( \sum (-1)^k\binom{n+1}{k}(j+1)a^ib^j \right)v \\
	 &+\left( \sum (-1)^k\binom{n}{k}a^ib^j\right)c_1
	 \end{align*}
	 where the sums are taken over the triples $(i,j,k)$ such that $i,j,k\geq 0$ and $i+j+k=n-1$. Putting all together, we obtain an explicit espression for $\xi_1=\pi_*[C_{a,b}^{\rm sing}]$.
\end{proof}
Let $r_{a,b,n}(c_1,u,v)$ be the expression appearing in \propref{prop:relation}. We are ready to state and prove the main result of the section:

\begin{thm}\label{thm:Pic(Fa,b)}
	$\Pic(\Fcal_{a,b}^n)=\ZZ\langle c_1,u,v \rangle/\langle r_{a,b,n}(c_1,u,v) \rangle$
\end{thm}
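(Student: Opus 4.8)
The plan is to combine the results established so far; essentially all the analytic work has already been done, and what remains is bookkeeping. First I would apply \corref{cor:pic}, which reduces the statement to a computation of $\Pic^{\GL_{n+1}}(\PP(V_{a,b})\setminus D_{a,b})$. Since $\PP(V_{a,b})$ is a projective bundle over the smooth variety $\PP(W_a)$ it is smooth, so its equivariant Picard group is canonically identified with the degree-one equivariant Chow group $\operatorname{A}^1_{\GL_{n+1}}(\PP(V_{a,b}))$, and the localization exact sequence of \cite{EG} reads
\[ \operatorname{A}^0_{\GL_{n+1}}(D_{a,b}) \arr \operatorname{A}^1_{\GL_{n+1}}(\PP(V_{a,b})) \arr \operatorname{A}^1_{\GL_{n+1}}(\PP(V_{a,b})\setminus D_{a,b}) \arr 0 .\]
By \parref{par:D} the subscheme $D_{a,b}$ is irreducible, reduced and of codimension one, so $\operatorname{A}^0_{\GL_{n+1}}(D_{a,b})\simeq\ZZ$, generated by the fundamental class, and its image in $\operatorname{A}^1_{\GL_{n+1}}(\PP(V_{a,b}))$ is the subgroup generated by the equivariant cycle class $[D_{a,b}]$. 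Combined with the identification $\operatorname{A}^1_{\GL_{n+1}}(\PP(V_{a,b}))\simeq\ZZ\langle c_1,u,v\rangle$ recalled in \parref{par:equivariant picard}, this gives $\Pic(\Fcal_{a,b}^n)\simeq\ZZ\langle c_1,u,v\rangle/\langle[D_{a,b}]\rangle$.

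It then remains to identify $[D_{a,b}]$ with the polynomial $r_{a,b,n}(c_1,u,v)$. For this I would recall from \parref{par:equivariant picard} that $C_{a,b}^{\rm sing}\subset C_{a,b}$ is reduced (either by \cite{SGA}*{Exp. XV, Th. 1.2.6} or by the explicit argument of \parref{par:D}), is $\GL_{n+1}$-invariant, and maps birationally onto $D_{a,b}$ under $\pi$; hence the proper pushforward satisfies $\pi_*[C_{a,b}^{\rm sing}]=[D_{a,b}]$ in $\operatorname{A}^1_{\GL_{n+1}}(\PP(V_{a,b}))$, with no extra multiplicity, because $\pi|_{C_{a,b}^{\rm sing}}$ is generically injective. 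Finally \propref{prop:relation} computes $\pi_*[C_{a,b}^{\rm sing}]=r_{a,b,n}(c_1,u,v)$, and the theorem follows.

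I do not expect a serious obstacle: the real content is contained in \lmref{lm:product}, \lmref{lm:class Xa,b}, \lmref{lm:c2} and \propref{prop:relation}. The only two points that deserve an explicit word are the identification $\Pic^{\GL_{n+1}}=\operatorname{A}^1_{\GL_{n+1}}$ on the smooth space $\PP(V_{a,b})$, so that the localization sequence above is available in this form, and the fact that the birational pushforward $\pi_*[C_{a,b}^{\rm sing}]$ introduces no spurious multiplicity — which rests precisely on the reducedness of $C_{a,b}^{\rm sing}$ and the generic injectivity of $C_{a,b}^{\rm sing}\arr D_{a,b}$ established in \parref{par:D}.
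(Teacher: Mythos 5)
Your argument is correct and follows the same route as the paper: reduce via \corref{cor:pic} to the equivariant Picard group, use the localization sequence together with the irreducibility and reducedness of $D_{a,b}$ to identify the relation subgroup with $\langle[D_{a,b}]\rangle$, replace $[D_{a,b}]$ by $\pi_*[C_{a,b}^{\rm sing}]$ via the birationality of $C_{a,b}^{\rm sing}\arr D_{a,b}$, and conclude with \propref{prop:relation}. You merely spell out the localization sequence and the multiplicity-one pushforward, which the paper leaves implicit in \parref{par:equivariant picard}.
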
 
\begin{proof}
	\corref{cor:pic} tells us that $\Pic(\Fcal_{a,b}^n)=\Pic^{\GL_{n+1}}(\PP(V_{a,b})\setminus D_{a,b})$. We already observed in \parref{par:equivariant picard} that the group on the right is generated by the elements $c_1$, $u$ and $v$, with a unique relation given by the cycle class $[D_{a,b}]$. In \parref{par:projection morphism} we reduced the computation of $[D_{a,b}]$ to the computation of $\pi_*[C_{a,b}^{\rm sing}]$. Then \propref{prop:relation} permits us to conclude.
\end{proof}

\begin{rmk}\label{rmk:relations Fab}
    As already observed in the proof of \propref{prop:relation}, there is a unique way to express $[C_{a,b}^{\rm sing}]$ as a polynomial in $t$ of degree $n$. Let $\xi_i$ be the coefficients of this polynomial in $t$. These cycles can be seen as polynomials in the generators of the $\GL_{n+1}$-equivariant Chow ring of $\PP(V_{a,b})$, and can be explicitly computed using \propref{prop:relation}.
    
    Then $\xi_1,\ldots,\xi_n$ gives relations in the Chow ring of $\Fcal_{a,b}^n$: we may ask ourselves if these relations actually generate the whole ideal of relations of this integral Chow ring.
\end{rmk}
\section{Picard group of moduli of smooth equidegree complete intersections in $\PP^n$}\label{sec:Gdm}
In this section we introduce and study the moduli stack $\Gcal_{d,m}^n$ of complete intersections of $m$ hypersurfaces of degree $d$ in $\PP^n$. This stack is defined in \parref{par:Gdmn}: in \propref{prop:Gdmn quot} we present $\Gcal_{d,m}^n$ as a quotient stack, and in \thmref{thm:pic Gdmn} we compute its Picard group. As observed in \rmkref{rmk:relations Gdmn}, we actually compute a set of other relations that hold true in the Chow ring of $\Gcal_{d,m}^n$.
\subsection{The moduli stack $\Gcal_{d,m}^n$.}
~\\
\mypar \label{par:Gdmn} Fix three integers $d$, $m$ and $n$ such that $d>0$ and $0<m<n$.
Let $\Gcal_{d,m}^n$ be the category fibred in groupoids over the site of schemes whose objects over a scheme $S$ are pairs $(V\arr S, X\subset\PP(V))$, where:
\begin{itemize}
	\item $V$ is a vector bundle over $S$ of rank $n+1$.
	\item $X$ is a closed subscheme of $\PP(V)$ of codimension $m$, smooth over $S$.
	\item For every geometric point $s$ in $S$ the fiber $X_s$ is a global complete intersection of $m$ hypersurfaces of degree $d$.
\end{itemize}
The morphisms in $\Gcal_{d,m}^n(S)$ between pairs $(V\to S,X\subset\PP(V))$ and $(V'\to S,X'\subset\PP(V'))$ are given by isomorphisms of the vector bundles $V$ and $V'$ which induce isomorphisms of $X$ and $X'$. It is easy to see that $\Gcal_{d,m}^n$ is a stack over the site of schemes.

\mypar\label{par:Xdmn} Let $E_{n+1}$ be the standard representation of $\GL_{n+1}$ and define $W_{d}:=\Sym^d E_{n+1}^{\vee}$. Let $\Gras$ be the grassmannian of $m$-dimensional subspaces of $W_{d}$. It naturally inherits a $\GL_{n+1}$-action. We denote $\Tcal\dmn$ the universal, locally free subsheaf of $W_{d}\otimes\Ocal_{\Gras}$ of rank $m$.

Consider the product $\Gras\times\PP^n$ with the two projections $\pr_1$ and $\pr_2$. Then we have a surjective morphism:
\[ W_{d}\otimes\Ocal_{\Gras\times\PP^n}=\pr_2^*(W_{d}\otimes\Ocal_{\PP^n}) \arr \pr_2^*\Ocal_{\PP^n}(d) \]
because $\Ocal_{\PP^n}(d)$ it is globally generated. This induces:
\[ \pr_1^*\Tcal\dmn\otimes\pr_2^*\Ocal_{\PP^n}(-d)\arr W_{d}\otimes\pr_2^*\Ocal_{\PP^n}(-d) \arr \pr_2^*\Ocal_{\PP^n}=\Ocal_{\Gras\times\PP^n}\]
Let $\Ical_{X'\dmn}$ to be the sheaf of ideals generated by the image of the morphism above, and denote $X'\dmn\subset \Gras\times\PP^n$ the closed subscheme defined by $\Ical_{X'\dmn}$. We have an obvious morphism $X'\dmn\arr \Gras$, which is proper.

There exists an open $\GL_{n+1}$-invariant subscheme $U\dmn$ of $\Gras$, whose complement has codimension greater than one, such that the fibres of the restricted morphism 
\[\pi:X'\dmn|_{U\dmn}=:X\dmn\arr U\dmn\]
are complete intersections of $m$ hypersurfaces of degree $d$. This open subscheme actually coincides with the subscheme of $\Gras$ where the fibres of $X'\dmn$ are local complete intersections, which is well known to be open.

Moreover, there exists a $\GL_{n+1}$-invariant divisor $D\dmn$ inside $U\dmn$ such that the scheme $X\dmn$ restricted over $U\dmn\setminus D\dmn$ is smooth. This can be proved using the same arguments of \parref{par:D}.

\begin{prop}\label{prop:Gdmn quot}
	We have $\Gcal_{d,m}^n\simeq \left[\left(U\dmn\setminus D\dmn\right)/\GL_{n+1}\right]$.
\end{prop}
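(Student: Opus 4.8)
The plan is to mimic exactly the argument used for \propref{prop:Fcal quot}. First I would introduce the rigidified stack $\wt{\Gcal}_{d,m}^n$ whose objects over a scheme $S$ are triples $(V\arr S,X\subset\PP(V),\alpha)$, where $(V\arr S,X\subset\PP(V))$ is an object of $\Gcal_{d,m}^n$ and $\alpha\colon V\xrightarrow{\sim}\A_S^{n+1}$ is a trivialization. The forgetful functor makes $\wt{\Gcal}_{d,m}^n$ into a $\GL_{n+1}$-torsor over $\Gcal_{d,m}^n$, so it suffices to produce a $\GL_{n+1}$-equivariant isomorphism $\wt{\Gcal}_{d,m}^n\simeq U_{d,m}\setminus D_{d,m}$. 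Using the trivialization, $\wt{\Gcal}_{d,m}^n$ is equivalent to the stack whose objects over $S$ are closed subschemes $X\subset\PP^n_S$, smooth over $S$, whose geometric fibres are global complete intersections of $m$ hypersurfaces of degree $d$.

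The restricted family $X_{d,m}|_{U_{d,m}\setminus D_{d,m}}\arr U_{d,m}\setminus D_{d,m}$ of \parref{par:Xdmn} is smooth with geometric fibres global complete intersections of $m$ degree-$d$ hypersurfaces, hence it defines a morphism $U_{d,m}\setminus D_{d,m}\arr\wt{\Gcal}_{d,m}^n$. For the inverse, given $(X\subset\PP^n_T)$ in $\wt{\Gcal}_{d,m}^n(T)$, I would consider the sheaf $\pr_{1*}(\Ical_X\otimes\pr_2^*\Ocal_{\PP^n}(d))$ on $T$. Since each geometric fibre $X_t$ is a complete intersection of $m$ forms of degree $d$, one has $H^0(\PP^n,\Ical_{X_t}(d))$ of dimension exactly $m$ and $H^1(\PP^n,\Ical_{X_t}(d))=0$ (this is the standard cohomological input; it follows from the Koszul resolution of $\Ical_{X_t}$ together with the fact that the complement of $U'_{d,m}$ in $\Gras$ has codimension $\geq 2$, or directly from the local-complete-intersection hypothesis). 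By cohomology and base change, $\Kcal:=\pr_{1*}(\Ical_X\otimes\pr_2^*\Ocal_{\PP^n}(d))$ is locally free of rank $m$ on $T$, and the inclusion $\Ical_X\into\Ocal_{\PP^n_T}$ twisted by $\Ocal(d)$ yields, after pushforward, an $\Ocal_T$-linear map $\Kcal\arr W_d\otimes\Ocal_T$. I would check this map is fibrewise injective with locally free cokernel — equivalently that $\Kcal$ is a subbundle of rank $m$ of $W_d\otimes\Ocal_T$ — so it classifies a morphism $g\colon T\arr\Gras$. The fact that $X$ is the complete intersection cut out by these $m$ forms (and not something larger) is exactly the condition that $g$ factors through $U_{d,m}$, and smoothness of $X$ over $T$ forces $g$ to factor through $U_{d,m}\setminus D_{d,m}$. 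Pulling back the universal family $X_{d,m}$ along $g$ recovers $X$, by uniqueness of the defining subspace of forms; and the whole construction is functorial in $T$ and $\GL_{n+1}$-equivariant (a change of trivialization acts compatibly on $\Ical_X$, on $W_d=\Sym^d E_{n+1}^\vee$, hence on $\Gras$). Finally I would verify the two constructions are mutually inverse, which is formal once one knows that $X$ is determined by, and determines, the rank-$m$ subspace $\pr_{1*}(\Ical_X(d))\subset W_d\otimes\Ocal_T$.

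The main obstacle, as in the codimension-two case, is the cohomological claim that $\pr_{1*}(\Ical_X\otimes\pr_2^*\Ocal_{\PP^n}(d))$ is locally free of rank exactly $m$ and is a subbundle of $W_d\otimes\Ocal_T$ — i.e.\ that the dimension of $H^0(X_t,\Ical_{X_t}(d))$ is constant equal to $m$ and $H^1$ vanishes, uniformly over all geometric fibres. For $\Fcal_{a,b}^n$ this was handled by the explicit Koszul-type resolution (the exact sequence $0\to\Ocal(-a)\to W_a\otimes\Ocal\to\Ocal_{H_a}(a)\to 0$ read in the right twist); here the analogue is the Koszul complex $0\to\bigwedge^m\Kcal\to\cdots\to\Kcal\to\Ocal_{\PP^n}\to\Ocal_X\to 0$ associated to the $m$ defining forms, and one must argue that its hypercohomology behaves well in families. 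Because the complement of $U_{d,m}$ in $\Gras$ has codimension $>1$ and $U_{d,m}$ is by definition the locus where the fibres are local complete intersections of the expected codimension $m$, the Koszul complex is exact there and the needed vanishing and constancy follow. Apart from this, everything reduces to the same bookkeeping as in \propref{prop:Fcal quot}, so I would keep the write-up short and simply point to that proof for the routine parts.
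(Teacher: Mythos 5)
Your proposal follows essentially the same route as the paper's proof: rigidify by the trivialization $\alpha$ to get a $\GL_{n+1}$-torsor $\wt{\Gcal}_{d,m}^n$, identify it with the stack of embedded smooth families $X\subset\PP^n_S$, map $U\dmn\setminus D\dmn$ in via the universal family, and invert by showing $\pr_{1*}(\Ical_X\otimes\pr_2^*\Ocal(d))$ is a rank-$m$ subbundle of $W_d\otimes\Ocal_S$ classifying a map to the Grassmannian that lands in $U\dmn\setminus D\dmn$. The paper dispatches the cohomological input with a one-line appeal to cohomology and base change where you spell out the Koszul argument, but the structure and all key steps coincide.
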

\begin{proof}
	Define $\wt{\Gcal}\dmn^{n}$ as the category fibred in groupoids over the site of schemes whose objects are triples $(V\arr S, X\subset\PP(V), \alpha)$ such that the pair $(V\arr S, X\subset\PP(V))$ is an object of $\Gcal_{d,m}^n(S)$ and $\alpha$ is an isomorphism between $V$ and $\A^{n+1}_S$. The forgetful functor $\wt{\Gcal}\dmn^{n}\arr \Gcal_{d,m}^n$ is a $\GL_{n+1}$-torsor, where $\GL_{n+1}(S)$ acts on the isomorphism $\alpha$ in the obvious way. We want to show that $\wt{\Gcal}\dmn^{n}$ is isomorphic to $U\dmn\setminus D\dmn$. Observe that the stack $\wt{\Gcal}\dmn^{n}$ is equivalent to the stack whose objects are subschemes $X$ of $\PP^n_S$, smooth over $S$, whose geometric fibres are global complete intersections of $m$ hypersurfaces of degree $d$ in $\PP^n_{k(s)}$. 
	
	The construction of $X\dmn$ of \parref{par:Xdmn} yields a morphism $U\dmn\setminus D\dmn\arr \wt{\Gcal}\dmn^{n}$. The inverse morphism is defined as follows: given a scheme $S$, let $X\subset\PP^n_S$ be a closed subscheme, smooth over $S$, whose fibres are global complete intersections of $m$ hypersurfaces of degree $d$. Let $\pr_1$ (resp. $\pr_2$) denote the projection morphism on $S$ (resp. on $\PP^n$). Let $\Ical_X$ be the sheaf of ideals of $X$, so that we have the inclusion $\Ical_X \hookrightarrow \Ocal_{\PP^n_S}$.
	
	We can tensorize this inclusion with $\pr_2^*\Ocal(d)$ and then take its pushforward along the first projection, so to get an inclusion:
	\[ \pr_{1*}(\Ical_X\otimes \pr_2^*\Ocal(d))\hookrightarrow \pr_{1*}\pr_2^*\Ocal(d)=W_{d}\otimes\Ocal_S \]
	The sheaf on the left is locally free of rank $m$: this can be easily proved using cohomology and base change. We have constructed in this way a morphism $\wt{\Gcal}\dmn^{n}(S)\arr (U\dmn\setminus D\dmn)(S)$ for every scheme $S$. As everything is functorial, we actually get the desired morphism $\wt{\Gcal}\dmn^n\arr U\dmn\setminus D\dmn$, which can be easily checked to be the inverse of the morphism $U\dmn\setminus D\dmn \arr \wt{\Gcal}\dmn^n$ that we defined before.
\end{proof}

\subsection{The Picard group of $\Gcal_{d,m}^n$}
~\\\\
\noindent As before, from \propref{prop:Fcal quot} we deduce the following corollary:

\begin{cor}\label{cor:pic G}
	$\Pic(\Gcal_{d,m}^n)\simeq \Pic^{\GL_{n+1}}(U\dmn\setminus D\dmn)$.
\end{cor}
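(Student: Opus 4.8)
The plan is to deduce this corollary from the quotient presentation of $\Gcal_{d,m}^n$ established in \propref{prop:Gdmn quot}, combining it with the identification of the Picard group of a quotient stack with an equivariant Picard group, exactly as was done for $\Fcal_{a,b}^n$ in \corref{cor:pic}. First I would invoke \propref{prop:Gdmn quot}, which gives an equivalence $\Gcal_{d,m}^n\simeq \left[\left(U\dmn\setminus D\dmn\right)/\GL_{n+1}\right]$. Since equivalent stacks have isomorphic Picard groups, it therefore suffices to identify $\Pic\!\left(\left[\left(U\dmn\setminus D\dmn\right)/\GL_{n+1}\right]\right)$.

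Next I would apply the result of \cite{EG} recalled just before \corref{cor:pic}: for a scheme $X$ equipped with the action of an algebraic group $G$ one has $\Pic([X/G])\simeq \Pic^{G}(X)$, the $G$-equivariant Picard group. Taking $X=U\dmn\setminus D\dmn$ — which is a quasi-projective scheme, being an open subscheme of the Grassmannian $\Gras$ — and $G=\GL_{n+1}$, this yields the asserted isomorphism $\Pic(\Gcal_{d,m}^n)\simeq \Pic^{\GL_{n+1}}(U\dmn\setminus D\dmn)$.

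Since both ingredients are already in place — \propref{prop:Gdmn quot} in the preceding subsection, and the quotient-stack/equivariant-Picard identification of \cite{EG} — there is essentially no obstacle here: the proof is a one-line deduction with the same structure as that of \corref{cor:pic}, with $\Fcal_{a,b}^n$ and $\PP(V_{a,b})\setminus D_{a,b}$ replaced throughout by $\Gcal_{d,m}^n$ and $U\dmn\setminus D\dmn$. (The only subtlety worth flagging, should one wish to be scrupulous, is that the sentence introducing the corollary refers to \propref{prop:Fcal quot}, whereas the relevant input is of course \propref{prop:Gdmn quot}.)
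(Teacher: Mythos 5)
Your proof is correct and matches the paper's own (implicit) argument exactly: the corollary follows from the quotient presentation in \propref{prop:Gdmn quot} together with the identification $\Pic([X/G])\simeq\Pic^G(X)$ from \cite{EG}, in the same way as \corref{cor:pic}. You are also right that the paper's reference to \propref{prop:Fcal quot} in the sentence preceding the corollary should read \propref{prop:Gdmn quot}.
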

\mypar \label{par:equi pic} Recall that the complement of $U\dmn$ in $\Gras$ has codimension greater than one. This implies:
\[ \Pic^{\GL_{n+1}}(U\dmn)=\Pic^{\GL_{n+1}}(\Gras)\]
It is well known that the $\GL_{n+1}$-equivariant Picard group of $\Gras$ is a free abelian group on two generators $c_1$ and $s_1$. Namely, the cycle $c_1$ is the first Chern class of the standard representation $E_{n+1}$ and $s_1$ is the first special Schubert class. In particular, $s_1=-c_1(\Tcal\dmn)$, where $\Tcal\dmn$ is the tautological subsheaf on $\Gras$. In this way we obtain:
\[ \Pic^{\GL_{n+1}}(U\dmn\setminus D\dmn)=\ZZ\langle c_1,s_1 \rangle/\langle [D\dmn] \rangle \]
Therefore, we only have to compute the cycle class of $D\dmn$.

\mypar\label{par:class Ddmn} In \parref{par:Xdmn} we constructed a closed subscheme $i:X\dmn\hookrightarrow U\dmn\times\PP^n$ whose geometric fibres over $U\dmn$ are global complete intersections of $m$ hypersurfaces of degree $d$ in $\PP^n$. Let $X\dmn^{\rm sing}$ be the singular locus of $X\dmn$ over $U\dmn$: by this we mean the degeneracy locus of the sheaf of relative differentials $\Omega^1_{\pr_1}$. 
If the characteristic of the base field $k$ is different from two, or if $n-m$ is odd, then this locus, defined by the $(n-m-1)^{\rm th}$ Fitting ideal of $\Omega^1_{\pr_1}$, is generically reduced and we have:
\[ \pr_{1*}i_*[X\dmn^{\rm sing}]=[D\dmn]\]
If the characteristic of the base field is two and $n-m$ is even, then we have:
\[ \pr_{1*}[X\dmn^{\rm sing}]=\pr_{1*}(2[(X\dmn^{\rm sing})_{\rm red}])=2[D\dmn] \]
These assertions follows from the following two observations: first, the restricted morphism from $(X\dmn^{\rm sing})_{\rm red}\arr D\dmn$ is birational, as the generic fibre hase only one isolated and ordinary quadratic singularity. Second, the scheme structure of $X^{\rm sing}\dmn$, induced by the $(n-m-1)^{\rm th}$ Fitting ideal of $\Omega^1_{\pr_1}$, can be deduced from \cite{SGA}*{Exp. XV, Th. 1.2.6}: we see that, in the case that the characteristic of the base field is two and $n-m$ is even, the length of the structure sheaf of $X\dmn^{\rm sing}$ localized at its (unique) irreducible component is two, and otherwise is one.

\begin{lm}\label{lm:product 2}
	We have:
	\[ \pr_{1*}i_*[X\dmn^{\rm sing}]=\pr_{1*}([X\dmn]\cdot c_{n-m+1}^{\GL_{n+1}}([\pr_2^*\Omega^1_{\PP^n}]-[\pr_1^*\Tcal\dmn\otimes\pr_2^*\Ocal_{\PP^n}(-d)]))\]
\end{lm}
\begin{proof}
	Observe that, if we denote $i:X\dmn\hookrightarrow U\dmn\times\PP^n$ the closed embedding, we have:
	\[ i^*\Ical_{X\dmn}=i^*(\pr_1^*\Tcal\dmn\otimes\pr_2^*\Ocal_{\PP^n}(-d)) \]
	With this minor change, the proof is basically the same as the one of \lmref{lm:product}.
\end{proof}
\begin{rmk}
	More generally, suppose to have two smooth varieties $B$ and $P$ of dimension $d$ and $n$ and a smooth subscheme $i:X\hookrightarrow B\times P$ of codimension $m$. 
	Let $r$ be the generic rank of $\Omega^1_{X/B}$ and define $X_{k}$ to be the locus of $X$ where the rank of $\Omega^1_{X/B}$ is less or equal to $k$, where $0\leq k\leq r$. Using the same notation of \cite{Ful}*{Sec. 14.4} we see that the arguments of the proof of \lmref{lm:product} give the following formula:
	\[ i_*[X_k]=[X]\cdot\Delta^{(d-k)}_{(n+d-m-k)}(\pr_2^*[\Omega^1_P]-[\Ical_X]) \]
	where $\Ical_X$ is the sheaf of ideals of $X$. It naturally extends to the equivariant case.
\end{rmk}

\begin{lm}\label{lm:class Xdmn}
	Let $\Tcal\dmn$ be the tautological subsheaf as in \parref{par:Xdmn} and let $t$ denotes the pullback of the hyperplane section of $\PP^n$ to $\Gras\times\PP^n$. Then we have: $[X\dmn]=\sum_{l=0}^{m}(-1)^{m-l}d^lt^lc_{m-l}(\Tcal\dmn)$.
\end{lm}
\begin{proof}
	By construction we have that $X\dmn$ is equal to the intersection of $m$ global sections of the rank $m$ locally free sheaf $\pr_1^*\Tcal^\vee\otimes\pr_2^*\Ocal(d)$. This implies that $[X]=c_m^{\GL_{n+1}}(\pr_1^*\Tcal^\vee\otimes\pr_2^*\Ocal(d))$. Applying the formula for the top Chern class of a tensor product of a vector bundle with an invertible line bundle (see \cite{Ful}*{Sec. 3.2}), we get the desired result.
\end{proof}

\begin{lm}\label{lm:c_e}
	Let $e=n-m+1$ and set $\Ecal\dmn:=\pr_1^*\Tcal\dmn\otimes\pr_2^*\Ocal_{\PP^n}(-d)$. Then $c_e^{\GL_{n+1}}(\pr_2^*[\Omega^1_{\PP^n}]-[\Ecal\dmn])$ is equal to:
	\[\sum (-1)^i\binom{n+1-j}{i-j}\binom{n-i}{m-1+k}d^{e-i-k}c_j \pr_1^*s_k(\Tcal\dmn) t^{e-j-k} \]
	where $0\leq i\leq e$, $0\leq j\leq i$ and $0\leq k\leq e-i$, and $s_k(\Tcal\dmn)$ is the $k^{\rm th}$ Segre class.
\end{lm}
\begin{proof}
	In order not to make the formulas below notationally too heavy, we will suppress the apexes for the equivariant Chern classes. We have:
	\begin{align*}
	c_e(\pr_2^*[\Omega^1_{\PP^n}]-[\Ecal\dmn])&=\left\{ \pr_2^*c(\Omega^1_{\PP^n})s(\Ecal\dmn) \right\}_e\\
	&=\sum_{i=0}^{e} \pr_2^*c_i(\Omega^1_{\PP^n})s_{e-i}(\Ecal\dmn)
	\end{align*}	
	The Euler exact sequence implies that $c(\Omega^1_{\PP^n})=c(E^{\vee}\otimes\pr_2^*\Ocal(-1))$. From \cite{Ful}*{Sec. 3.2}, we know that:
	\[c_i(E^\vee \otimes \pr_2^*\Ocal(-1))=\sum_{j=0}^{i} (-1)^i\binom{n+1-j}{i-j}c_jt^{i-j}\]
	Applying the formula for the Segre class of a tensor product of a vector bundle with a line bundle (see \cite{Ful}*{Sec. 3.1}), we get:
	\[ s_{e-i}(\Ecal\dmn)=\sum_{k=0}^{e-i} \binom{n-i}{m-1+k}d^{e-i-k}\pr_1^*s_k(\Tcal\dmn) t^{e-i-k}  \]
	Putting all together, we get the desired expression.
\end{proof}
\begin{prop}\label{prop:Xdmn c_e}
	We have:
	\begin{align*}
	[X^{\rm sing}\dmn]=\sum & (-1)^{m-l+i}\binom{n+1-j}{i-j}\binom{n-i}{m-1+k}\cdot\\&\cdot d^{e+l-i-k} c_j \pr_1^*(s_k(\Tcal\dmn)c_{m-l}(\Tcal\dmn)) t^{e+l-j-k}
	\end{align*}
	where $0\leq i\leq e$, $0\leq j\leq i$, $0\leq k\leq e-i$ and $0\leq l\leq m$.
\end{prop}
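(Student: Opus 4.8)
The plan is to combine \lmref{lm:product 2}, \lmref{lm:class Xdmn} and \lmref{lm:c_e} by multiplying the two polynomial expressions they provide and reading off the appropriate coefficient. Concretely, \lmref{lm:product 2} reduces the computation of the pushforward $\pr_{1*}i_*[X^{\rm sing}\dmn]$ to the computation of the product $[X\dmn]\cdot c_e^{\GL_{n+1}}(\pr_2^*[\Omega^1_{\PP^n}]-[\Ecal\dmn])$ inside the $\GL_{n+1}$-equivariant Chow ring of $U\dmn\times\PP^n$, where $e=n-m+1$. By \lmref{lm:class Xdmn} the first factor is $\sum_{l=0}^m (-1)^{m-l}d^l t^l c_{m-l}(\Tcal\dmn)$, and by \lmref{lm:c_e} the second factor is the displayed sum over indices $i,j,k$.

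First I would simply multiply these two expressions formally. Each monomial from the first factor contributes a sign $(-1)^{m-l}$, a power $d^l$, a power $t^l$, and a Chern class $c_{m-l}(\Tcal\dmn)$; each monomial from the second factor contributes a sign $(-1)^i$, binomial factors $\binom{n+1-j}{i-j}\binom{n-i}{m-1+k}$, a power $d^{e-i-k}$, a Chern class $c_j$ of the standard representation, a Segre class $\pr_1^*s_k(\Tcal\dmn)$, and a power $t^{e-j-k}$. Taking the product monomial by monomial, the signs combine to $(-1)^{m-l+i}$, the powers of $d$ combine to $d^{e+l-i-k}$, the powers of $t$ combine to $t^{e+l-j-k}$, and the classes of $\Tcal\dmn$ combine to $\pr_1^*(s_k(\Tcal\dmn)c_{m-l}(\Tcal\dmn))$; the binomial coefficients are carried along unchanged. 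This gives exactly the claimed expression, now as a class in the full product ring, and since $X^{\rm sing}\dmn$ has codimension $e$ in $X\dmn$ and hence codimension $m+e=n+1$ in $U\dmn\times\PP^n$ (as established in \parref{par:class Ddmn} by the same arguments as in \parref{par:D}), this polynomial is literally $i_*[X^{\rm sing}\dmn]$ up to the identification being made; the statement records the product before collapsing via the Chow-ring relations of $U\dmn\times\PP^n$, just as in \rmkref{rmk:relations Fab}.

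There is essentially no obstacle here beyond bookkeeping: the only point requiring a word of care is checking that the ranges of the summation indices in the product match those asserted, namely $0\le i\le e$, $0\le j\le i$, $0\le k\le e-i$ and $0\le l\le m$, which are inherited directly from the ranges in \lmref{lm:class Xdmn} and \lmref{lm:c_e} (one must only observe that no further truncation is imposed by the degree-$e$ extraction already performed inside \lmref{lm:c_e}). I expect the bulk of the work — such as it is — to be the purely mechanical verification that the exponents of $d$ and $t$ and the sign are as written after distributing the product, which I would carry out directly and which involves no conceptual difficulty.
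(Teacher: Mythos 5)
Your proposal is correct and follows exactly the paper's own route: the paper's proof is the one-line statement that the result ``easily follows from the proofs of \lmref{lm:product 2}, \lmref{lm:class Xdmn} and \lmref{lm:c_e}'', and your expansion of that multiplication (signs, powers of $d$ and $t$, the classes of $\Tcal\dmn$, and the index ranges) is the intended bookkeeping. Nothing further is needed.
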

\begin{proof}
	It easily follows from the proofs of \lmref{lm:product 2}, \lmref{lm:class Xdmn} and \lmref{lm:c_e}.
\end{proof}	
\begin{cor}\label{cor:Ddmn}
	We have:
	\begin{align*}
	[D\dmn] =& \alpha^{-1}\left(\sum_{i=0}^{n-m+1} (-1)^i\binom{n+1}{i}\binom{n+1-i}{m}d^{n-i}\right) s_1\\
	&+  \alpha^{-1}\left( \sum_{i=0}^{n-m+1} (-1)^{i+1}\binom{n}{i}\binom{n-i}{m-1} d^{n+1-i} \right) c_1
	\end{align*}
	where $\alpha$ is equal to two if the characteristic of the base field is two and $n-m$ is even, and is equal to one otherwise.
\end{cor}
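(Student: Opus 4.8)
The plan is to deduce the formula from \propref{prop:Xdmn c_e} by pushing forward along $\pr_1\colon\Gras\times\PP^n\to\Gras$ and invoking the identity $\pr_{1*}i_*[X\dmn^{\rm sing}]=\alpha[D\dmn]$ recorded in \parref{par:class Ddmn}; thus it suffices to compute $\pr_{1*}i_*[X\dmn^{\rm sing}]$ and divide by $\alpha$. The structural fact to exploit is that the $\GL_{n+1}$-equivariant Chow ring of $\Gras\times\PP^n$ is free over that of $\Gras$ on $1,t,\dots,t^n$, subject only to the projective-bundle relation $t^{n+1}=-(c_1t^n+c_2t^{n-1}+\dots+c_{n+1})$ coming from $\PP^n=\PP(E_{n+1})$, so that $\pr_{1*}$ of a class written in this basis is just its coefficient of $t^n$ (all lower powers of $t$ being annihilated). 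Since the exponent $e+l-j-k$ of $t$ in \propref{prop:Xdmn c_e} (with $e=n-m+1$) never exceeds $e+m=n+1$, reducing the unique top-degree monomial gives
\[ \pr_{1*}i_*[X\dmn^{\rm sing}]=\bigl[\text{coefficient of }t^{n}\bigr]-c_1\cdot\bigl[\text{coefficient of }t^{n+1}\bigr]. \]

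The next step is to read off these two coefficients from \propref{prop:Xdmn c_e}. The $t$-degree $n+1$ monomial forces $l=m$ and $j=k=0$; since $s_0(\Tcal\dmn)c_0(\Tcal\dmn)=1$, its coefficient is the scalar $\sum_{i=0}^{n-m+1}(-1)^i\binom{n+1}{i}\binom{n-i}{m-1}d^{\,n+1-i}$. The coefficient of $t^{n}$ requires $j+k=l-m+1$, which, using $0\le l\le m$ and $j,k\ge 0$, leaves only $l\in\{m-1,m\}$; inspecting which products $c_j\,\pr_1^*\bigl(s_k(\Tcal\dmn)c_{m-l}(\Tcal\dmn)\bigr)$ can then occur, exactly three families of terms survive: $(l,j,k)=(m,1,0)$, contributing a multiple of $c_1$ (via $s_0c_0=1$); $(l,j,k)=(m,0,1)$, contributing a multiple of $\pr_1^*s_1(\Tcal\dmn)$; and $(l,j,k)=(m-1,0,0)$, contributing a multiple of $\pr_1^*c_1(\Tcal\dmn)$. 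Using $s_1(\Tcal\dmn)=-c_1(\Tcal\dmn)=s_1$, the last two feed the $s_1$-coefficient and the first feeds the $c_1$-coefficient.

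Finally I would assemble and simplify. The $s_1$-coefficient is unaffected by the $t^{n+1}$ correction: adding the $(m,0,1)$- and $(m-1,0,0)$-contributions and using Pascal's rule $\binom{n-i}{m}+\binom{n-i}{m-1}=\binom{n+1-i}{m}$ (together with $\binom{m-1}{m}=0$, which lets the two summation ranges be aligned) collapses it to $\sum_{i=0}^{n-m+1}(-1)^i\binom{n+1}{i}\binom{n+1-i}{m}d^{\,n-i}$. The $c_1$-coefficient is the $(m,1,0)$-contribution minus $c_1$ times the $t^{n+1}$-coefficient; combining the two sums term by term and using $\binom{n}{i-1}-\binom{n+1}{i}=-\binom{n}{i}$ (with the $i=0$ term checked separately) gives $\sum_{i=0}^{n-m+1}(-1)^{i+1}\binom{n}{i}\binom{n-i}{m-1}d^{\,n+1-i}$. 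Dividing through by $\alpha$ yields precisely the stated formula — and when $\alpha=2$ the right-hand side is automatically $2$-divisible, since by \parref{par:class Ddmn} the class just computed equals $2[D\dmn]$.

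The only genuinely delicate point is the one flagged above: $i_*[X\dmn^{\rm sing}]$ really does have a nonzero component of $t$-degree $n+1$, and its reduction modulo the projective-bundle relation is what produces the $-c_1$ term in the answer; omitting it gives a wrong formula (for instance it would contradict $[D\dmn]=0$ when $d=1$, where the singular locus is empty, and vanishing is recovered only after this reduction). Everything else is elementary manipulation of binomial sums together with two applications of Pascal's identity.
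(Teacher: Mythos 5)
Your proof is correct and follows essentially the same route as the paper: push forward via $\pr_{1*}i_*[X\dmn^{\rm sing}]=\alpha[D\dmn]$, reduce the $t^{n+1}$ term of the expression in \propref{prop:Xdmn c_e} using the projective-bundle relation to get $\xi_n-c_1\xi_{n+1}$, isolate the three surviving families of monomials linear in $c_1$, $s_1(\Tcal\dmn)$, $c_1(\Tcal\dmn)$, and simplify with $s_1=s_1(\Tcal\dmn)=-c_1(\Tcal\dmn)$ and Pascal's identity. Your write-up is in fact more explicit than the paper's about which triples $(l,j,k)$ contribute and about the necessity of the $-c_1\xi_{n+1}$ correction.
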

\begin{proof}
	From \parref{par:class Ddmn} we know that $\pr_{1*}[X\dmn^{\rm sing}]=\alpha[D\dmn]$. Using \propref{prop:Xdmn c_e}, we can write \[ [X\dmn^{\rm sing}]=\sum_{q=0}^{n+1} \xi_qt^q\]
	where the coefficients $\xi_q$ are polynomials in $c_j$, $s_k(\Tcal\dmn)$ and $c_p(\Tcal\dmn)$. Using the relation:
	\[ \sum_{i=0}^{n+1} c_{i}t^{n+1-i} = 0 \]
	we deduce that $\pr_{1*}[X^{\rm sing}\dmn]= \xi_n-\xi_{n+1}c_1$.
	To explicitly compute the coefficient $\xi_n$, we have to look at the addends in the summation of \propref{prop:Xdmn c_e} where one and only one among the classes $c_1$, $s_1(\Tcal\dmn)$ and $c_1(\Tcal\dmn)$ appears. Then, to obtain the desired formula, we have to use the fact that $c_1(\Tcal)=-s_1(\Tcal)=:s_1$ and the Pascal identity.
\end{proof}
Let $r\dmn(c_1,s_1)$ be the expression appearing in \corref{cor:Ddmn}. Then we have:

\begin{thm}\label{thm:pic Gdmn}
	$\Pic(\Gcal_{d,m}^n)=\ZZ\langle c_1,s_1 \rangle/\langle r\dmn(c_1,s_1) \rangle$.
\end{thm}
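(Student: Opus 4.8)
The plan is to follow verbatim the strategy used for \thmref{thm:Pic(Fa,b)}, since at this point all the substantive work has already been done in the preceding lemmas. First I would invoke \corref{cor:pic G} to replace $\Pic(\Gcal_{d,m}^n)$ by the equivariant Picard group $\Pic^{\GL_{n+1}}(U\dmn\setminus D\dmn)$. Next, as recalled in \parref{par:equi pic}, the complement of $U\dmn$ in $\Gras$ has codimension $\geq 2$, so $\Pic^{\GL_{n+1}}(U\dmn)=\Pic^{\GL_{n+1}}(\Gras)=\ZZ\langle c_1,s_1\rangle$; since $D\dmn$ is an irreducible divisor in the smooth space $U\dmn$, the localization (excision) exact sequence for equivariant Chow groups in codimension one yields the presentation $\Pic^{\GL_{n+1}}(U\dmn\setminus D\dmn)\simeq\ZZ\langle c_1,s_1\rangle/\langle[D\dmn]\rangle$. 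Thus the whole theorem reduces to identifying $[D\dmn]$ with the polynomial $r\dmn(c_1,s_1)$.

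That identification is exactly \corref{cor:Ddmn}, so formally the last step is simply to quote it. For completeness one recalls the chain behind that corollary: \lmref{lm:class Xdmn} expresses $[X\dmn]$ as a top Chern class, \lmref{lm:c_e} computes the relevant degree-$e$ Chern class of $[\pr_2^*\Omega^1_{\PP^n}]-[\Ecal\dmn]$, \lmref{lm:product 2} combines them into $\pr_{1*}i_*[X\dmn^{\rm sing}]$ via the equivariant Thom--Porteous formula, \propref{prop:Xdmn c_e} records the resulting class as a polynomial in $t$ with coefficients in $\ZZ[c_j,\, s_k(\Tcal\dmn),\, c_p(\Tcal\dmn)]$, and finally one pushes forward along $\pr_1$ by extracting the coefficients $\xi_n$ and $\xi_{n+1}$ after reducing modulo the relation $\sum_i c_i t^{n+1-i}=0$, using $c_1(\Tcal\dmn)=-s_1(\Tcal\dmn)=:s_1$ together with Pascal's identity.

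The only step that is not purely formal — and the one I would flag as the real obstacle — is the passage from $\pr_{1*}[X\dmn^{\rm sing}]$ to $[D\dmn]$ described in \parref{par:class Ddmn}. One must know (i) that the reduced relative singular locus maps birationally onto $D\dmn$, which follows from the fact that a sufficiently general singular member of the family acquires a single ordinary double point, and (ii) the precise scheme structure that the $(n-m-1)^{\rm th}$ Fitting ideal of $\Omega^1_{\pr_1}$ puts on $X\dmn^{\rm sing}$ at its generic point, which by \cite{SGA}*{Exp. XV, Th. 1.2.6} has length $1$ except when $\Char k=2$ and $n-m$ is even, where it has length $2$. In that exceptional case $\pr_{1*}[X\dmn^{\rm sing}]=2[D\dmn]$, so one has to check that all the integers produced by pushing forward the class of \propref{prop:Xdmn c_e} are then even, so that $r\dmn(c_1,s_1)=\alpha^{-1}\pr_{1*}[X\dmn^{\rm sing}]$ is a well-defined integral class; this divisibility is precisely what the factor $\alpha^{-1}$ in \corref{cor:Ddmn} encodes. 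Granting this, the theorem follows at once by combining \corref{cor:pic G}, \parref{par:equi pic} and \corref{cor:Ddmn}.
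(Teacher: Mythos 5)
Your proposal is correct and follows the paper's own (very short) proof exactly: the theorem is obtained by combining \corref{cor:pic G}, the presentation of the equivariant Picard group in \parref{par:equi pic}, and the computation of $[D\dmn]$ in \corref{cor:Ddmn}. The additional discussion of the scheme structure of $X\dmn^{\rm sing}$ and the factor $\alpha$ is a fair elaboration, but it belongs to the already-established \parref{par:class Ddmn} and \corref{cor:Ddmn} rather than to the proof of the theorem itself.
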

\begin{proof}
	It follows from \corref{cor:pic G}, \parref{par:equi pic} and \corref{cor:Ddmn}.
\end{proof}
\begin{rmk}\label{rmk:relations Gdmn}
    There is a unique way to express $[X\dmn^{\rm sing}]$ as a polynomial in $t$ of degree $n$. Denote the coefficients of this polynomial as $\zeta_i$, where $0\leq i\leq n$: these cycles can be seen as polynomials in the generators of the $\GL_{n+1}$-equivariant Chow ring of $U\dmn$, and can be explicitly computed using \propref{prop:Xdmn c_e}.
    
    Then the cycles $\zeta_1,\dots,\zeta_n$ gives relations in the Chow ring of $\Gcal_{d,m}^n$: we may ask ourselves, just as we have done in \rmkref{rmk:relations Fab}, if these relations actually generate the whole ideal of relations of this Chow ring.
\end{rmk}
\section{Picard group of moduli of smooth curves of low genus} \label{sec:mg}
The results obtained so far are applied in this section in order to compute the Picard group of $\Mcal_g$, the moduli stack of smooth curves of genus $g$, for $g=3,4,5$. The main results are \thmref{thm:pic M3}, \thmref{thm:pic M4} and \thmref{thm:pic M5}. Moreover, we also easily deduce, using this machinery, an explicit expression of some geometrically meaningful divisors on these stacks in terms of the generator of the Picard group. Namely, in \corref{cor:H3} we compute the cycle class of the substack of hyperelliptic curves of genus three, in \corref{cor:M4ev} we compute the cycle class of the substack of smooth curves of genus four with an even theta characteristic and in \corref{cor:T5} we compute the cycle class of the stack of trigonal curves of genus five.

We recall here a technical result that will be frequently used throughout the section:

\begin{lm}\label{lm:base change}
	Let $\pi:C\arr S$ be a smooth and proper morphism whose fibres are non-hyperelliptic genus $g$ curves, and let $\omega_\pi$ denote the relative dualizing sheaf. Then:
	\begin{enumerate}
		\item $\pi_*\omega_\pi$ is a locally free sheaf of rank $g$ and its formation commutes with base change. 
		\item The canonical morphism $\pi^*\pi_*\omega_\pi\arr \omega_\pi$ is surjective.
	\end{enumerate} 
\end{lm}
\begin{proof}
	It follows from the cohomology and base change theorem: the proof of \cite{Ols}*{Sec. 8.4} works also for this case, after changing the tricanonical sheaf with the canonical one.
\end{proof}
\subsection{Genus three case}
~\\
\mypar \label{par:U3'} Let $\Mcal_3$ be the moduli stack of smooth curves of genus three, and let $\Hcal_3$ be the substack of hyperelliptic curves, and define $\Ucal_3:=\Mcal_3\setminus\Hcal_3$. Thanks to \lmref{lm:base change}, we can consider the category fibred in groupoids over the site of schemes whose objects are triples $(\pi,i,\beta)$ where:
\begin{itemize}
    \item $\pi:C\arr S$ is an object of $\Ucal_3$.
	\item $i:C\hookrightarrow\PP(\pi_*\pi^*\omega_\pi)$ is a closed embedding.
	\item $\beta$ is an isomorphism between $i^*\Ocal(1)$ and $\omega_{C/S}$.
\end{itemize}
There is an obvious morphism $\Ucal_3'\arr \Ucal_3$. We also have a morphism in the opposite direction: indeed, given a smooth family of genus three, non-hyperelliptic curves $\pi:C\arr S$, by \lmref{lm:base change} we have a canonical, surjective morphism $\pi^*\pi_*\omega_\pi\arr \omega_\pi$, which in turn induces a canonical embedding $i:C\hookrightarrow\PP(\pi_*\omega_\pi)$ and an isomorphism $\beta:i^*\Ocal(1)\simeq\omega_\pi$. As everything is functorial, we get the claimed morphism $\Ucal_3 \arr \Ucal'_3$. It is almost immediate to check that the two morphisms are equivalences of stacks.

\mypar \label{par:Pdmn} Let $\Gcal_{d,m}^n$ be the stack introduced in \parref{par:Gdmn}. We can consider the following invertible sheaf:
\[ \Lcal\dmn: (V\arr S,X\subset\PP(V))\longmapsto \pr_*(\det(\Ical_X|_X)\otimes\Ocal_X(mn))\otimes\det(V)\]
where $\pr:\PP(V)\arr S$ is the projection. The sheaf appearing on the right is locally free by cohomology and base change theorem. Denote as $\Pcal\dmn^n$ the associated $\GG_m$-torsor. More precisely, the objects of $\Pcal\dmn^n$ are triples $(V\arr S,X\subset\PP(V),\gamma)$, where the pair $(V\arr S,X\subset\PP(V))$ is an object of $\Gcal_{d,m}^n$ and $\gamma$ is a trivializing section of $\pr_*(\det(\Ical_X|_X)\otimes\Ocal_X(mn))\otimes\det(V)$.

\begin{prop}\label{prop:U3 iso P41}
	Set $n=2$. Then we have $\Ucal_3\simeq \Pcal_{4,1}^2$.
\end{prop}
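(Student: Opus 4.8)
The plan is to identify $\Ucal_3$ with a $\GG_m$-torsor over $\Gcal_{4,1}^2$ by observing that a canonically embedded non-hyperelliptic genus three curve is exactly a smooth plane quartic, and that the extra data of a trivializing section of the line bundle $\Lcal_{4,1}^2$ is precisely the choice of isomorphism $\beta$ in the description of $\Ucal_3$ given in \parref{par:U3'}. First I would use \parref{par:U3'} to replace $\Ucal_3$ by the equivalent stack $\Ucal_3'$ of triples $(\pi:C\arr S, i:C\into\PP(\pi_*\omega_\pi), \beta: i^*\Ocal(1)\simeq\omega_{C/S})$. For such a family, $\PP(\pi_*\omega_\pi)$ is a $\PP^2$-bundle over $S$ and $C$ is a family of curves in it of degree $2g-2=4$ and arithmetic genus $3$; by adjunction $C$ is a divisor of degree $4$, i.e. a family of smooth plane quartics, hence an object of $\Gcal_{4,1}^2$ with $V=\pi_*\omega_\pi$ (of rank $3=n+1$). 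This produces a functor $\Ucal_3'\arr\Gcal_{4,1}^2$; the content of the proposition is that it lifts to the $\GG_m$-torsor $\Pcal_{4,1}^2$ and that the lift is an equivalence.

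The key computation is to match the line bundle $\Lcal_{4,1}^2$ with the relative dualizing sheaf data. For $X\subset\PP(V)$ a family of plane quartics with $\dim = 2$, so $n=2$, $m=1$, $mn=2$, we have $\Ical_X|_X\simeq\Ocal_X(-4)$ (the conormal bundle of a degree $4$ divisor in a $\PP^2$-bundle, twisted by $\det V^{\vee}$ appropriately), and by adjunction $\omega_{X/S}\simeq\Ocal_X(4-3)\otimes\pr^*\det V^{\vee} \simeq \Ocal_X(1)\otimes\pr^*\det V^{\vee}$, using $\omega_{\PP(V)/S}\simeq\Ocal(-3)\otimes\pr^*\det V^{\vee}$ for $\PP(V)=\PP^2$-bundle. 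So I would compute directly that $\det(\Ical_X|_X)\otimes\Ocal_X(2)\otimes\pr^*\det V \simeq \Ocal_X(-4+2)\otimes\pr^*(\det V^{\vee})\otimes\pr^*\det V\otimes(\text{twist})$ collapses — after pushing forward by $\pr_*$, which on $X$ (a relative curve with $\H^0$ of the relevant sheaf one-dimensional) gives a line bundle on $S$ — to something canonically isomorphic to $\mathcal{H}om(i^*\Ocal(1),\omega_{X/S})$, or rather its pushforward. A trivialization $\gamma$ of $\Lcal_{4,1}^2$ is then the same datum as an isomorphism $\beta: i^*\Ocal(1)\simeq\omega_{X/S}$. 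This sets up mutually inverse functors between $\Ucal_3'$ and $\Pcal_{4,1}^2$.

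Finally I would check that the composite functors are naturally isomorphic to the identities: in one direction, starting from a plane quartic family with a trivialization, the canonical embedding reconstructed from $\omega_{C/S}=\pi_*\omega_\pi$ is the original one because $\pi_*\omega_\pi$ is canonically $V$ (here one needs that every object of $\Gcal_{4,1}^2$ over a field is a \emph{canonically embedded} curve, i.e. the embedding line bundle is the dualizing sheaf — true for smooth plane quartics, and the trivialization $\gamma$ rigidifies this identification over a general base); in the other direction this is immediate from \parref{par:U3'}. The main obstacle I anticipate is the bookkeeping of the various $\det V$ twists: one must verify that the particular twist $\Ocal_X(mn)=\Ocal_X(2)$ and the factor $\det(V)$ in the definition of $\Lcal\dmn$ are exactly what is needed to make $\Lcal_{4,1}^2$ descend to the sheaf $\mathcal{H}om(i^*\Ocal(1),\omega_{C/S})$ canonically (not just up to a line bundle pulled back from $S$), so that trivializing $\Lcal_{4,1}^2$ is genuinely equivalent to choosing $\beta$. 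Everything else is functoriality and the standard cohomology-and-base-change input of \lmref{lm:base change}.
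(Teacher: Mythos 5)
Your proposal follows essentially the same route as the paper: pass to $\Ucal_3'$ via \parref{par:U3'}, send a canonically embedded family to the underlying object of $\Gcal_{4,1}^2$, and identify the isomorphism $\beta:i^*\Ocal(1)\simeq\omega_{C/S}$ with a trivializing section $\gamma$ of $\Lcal_{4,1}$ by pushing $\beta$ forward and applying adjunction together with $\omega_{\PP(\pi_*\omega_\pi)}\simeq\Ocal(-3)\otimes\det((\pi_*\omega_\pi)^\vee)$, exactly as in the paper's chain of identifications of $H^0(C,i^*\Ocal(-1)\otimes\omega_\pi)$. The twist bookkeeping you flag is indeed the only delicate point, and it works out precisely as you anticipate (the paper's proof lands on the sheaf $\pi_*(i^*(\Ical_C(4)))\otimes\det(\pi_*\omega_\pi)$, i.e.\ the twist is by $\Ocal_X(dm)$), so your argument is correct and matches the paper's.
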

\begin{proof}
	From \parref{par:U3'} we see that we can equivalently show that $\Ucal_3'$ is isomorphic to $\Pcal_{4,1}^2$.
	First we construct a morphism $\Ucal_3'\arr \Pcal_{4,1}^2$. Recall that the canonical model of a family $\pi:C\arr S$ of smooth, non-hyperelliptic curves of genus three is a smooth quartic, thus given an object $(\pi,i,\beta)$ of $\Ucal_3'$ the pair $(\VV(\pi_*\omega_\pi)\arr S,i(C)\subset\PP(\pi_*\omega_\pi))$ is an object of $\Gcal_{4,1}$.
	
	The isomorphism $\beta$ can be seen as a global, everywhere non-vanishing, section of $i^*\Ocal(-1)\otimes\omega_\pi$. We have the following chain of easy identifications:
	\begin{align*}
	H^0(C,i^*\Ocal(-1)\otimes\omega_\pi)&=H^0(\PP(\pi_*\omega_\pi),i_*(i^*(\Ocal(-1)\otimes \Ical_C^{\vee} \otimes\omega_{\PP(\pi_*\omega_\pi)})))\\
	&= H^0(\PP(\pi_*\omega_\pi), \Ical_C^{\vee}\otimes \omega_{\PP(\pi_*\omega_\pi)}(-1))
	\end{align*}
	Using the canonical isomorphism $\omega_{\PP(\pi_*\omega_\pi)}\simeq \Ocal(-3)\otimes\det((\pi_*\omega_\pi)^\vee)$, we deduce that the isomorphism $\beta$ can actually be regarded as a trivialization $\gamma$ of the invertible sheaf $\pi_*(i^*(\Ical_C(4)))\otimes\det(\pi_*\omega_\pi)$, so that the triple $(\VV(\pi_*\omega_\pi)\arr S,i(C)\subset\PP(\pi_*\omega_\pi),\gamma)$ is an object of $\Pcal_{4,1}^2$.
    As everything is functorially well behaved, this defines a morphism $\Ucal_3'\arr \Pcal_{4,1}^2$.
	
	To construct the inverse morphism, consider an object $(V\arr S, X\subset \PP(V),\gamma)$ of $\Pcal_{4,1}^2$: with the same argument used before, we see that the trivializing section $\gamma$ induces an isomorphism $\beta:i^*\Ocal(1)\simeq \omega_{X/S}$, that pushed forward to $S$ allows us to identify $V$ with $\pr|_{X*}\omega_{X/S}$, using the canonical isomorphism $\pr_*\Ocal(1)\simeq V$. We use this identification to define the closed embedding $i:X\subset\PP(V)\simeq\PP(\pr|_{X*}\omega_{X/S})$. Therefore, we can construct the morphism $\Pcal_{4,1}^2\arr \Ucal_3'$ by sending a triple $(V\arr S, X\subset\PP(V),\gamma)$ to the object $(\pr|_X,i,\beta)$.
	
	It is immediate to check that the two morphisms that we have defined are equivalences of stacks.
\end{proof}

\mypar \label{par:Pic Pdmn} From \cite{Vis98}*{pg. 638} we know that there is a surjective morphism:
\[ \rho^*:\Pic(\Gcal_{d,m}^n) \arr \Pic(\Pcal\dmn^n)\]
whose kernel is generated by the first Chern class of the invertible sheaf $\Lcal\dmn$. This relation may be computed using \propref{prop:Gdmn quot}: indeed, the pullback of $\Lcal\dmn$ along the $\GL_{n+1}$ torsor $U\dmn\setminus D\dmn \arr \Gcal_{d,m}^n$ is the equivariant invertible sheaf \[\pr|_{X\dmn*}(\det(\Ical_{X\dmn}|_{X\dmn})\otimes\pr_2^*\Ocal(mn)|_{X\dmn})\otimes\det(E_{n+1})\]
where $X\dmn\arr U\dmn\setminus D\dmn $ is tautological family of smooth global complete intersections of three quadrics and $E_{n+1}$ is the standard representation of $\GL_{n+1}$.

Recall that $\Ical_{X\dmn}|_{X\dmn}\simeq \pr_1^*\Tcal\otimes\pr_2^*\Ocal(-d)|_{X\dmn}$, where $\Tcal$ is the universal subsheaf defined over $\Gras$. Putting all together, after a straightforward computation, we deduce:
\[ \Pic(\Pcal\dmn^n)\simeq\Pic(\Gcal_{d,m}^n)/\langle c_1-s_1 \rangle \]
where $s_1$ is the first special Schubert cycle, i.e. the first Segre class of $\Tcal$. We are ready to prove the main result of this subsection:

\begin{thm}\label{thm:pic M3}
	Let $\lambda_1$ be the first Chern class of the Hodge bundle over $\Mcal_3$. Then the Picard group of $\Mcal_3$ is freely generated by $\lambda_1$, without any assumption on the characteristic of the base field.
\end{thm}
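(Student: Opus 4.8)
The plan is to compute $\Pic(\Mcal_3)$ by combining the description of its non-hyperelliptic open substack as a quotient stack with an excision argument along the hyperelliptic divisor. Write $\Ucal_3=\Mcal_3\setminus\Hcal_3$, and recall from \propref{prop:U3 iso P41} that $\Ucal_3\simeq\Pcal_{4,1}^2$. Since $\Hcal_3$ is an irreducible closed substack of $\Mcal_3$ of codimension one and both stacks are smooth quotient stacks, the excision exact sequence for equivariant Chow groups reads
\[ \ZZ\xrightarrow{\,1\mapsto[\Hcal_3]\,}\Pic(\Mcal_3)\arr\Pic(\Ucal_3)\arr 0 .\]
So it suffices first to compute $\Pic(\Ucal_3)$ together with the restriction $\lambda_1|_{\Ucal_3}$, and then to pin down the two resulting generators $\lambda_1$ and $[\Hcal_3]$ of $\Pic(\Mcal_3)$, using that $\Pic(\Mcal_3)$ has rank one (\cite{Mor}).

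For the first point, apply \corref{cor:Ddmn} with $d=4$, $m=1$, $n=2$: here $n-m=1$ is odd, hence $\alpha=1$ in every characteristic, and a short computation gives $[D_{4,1}^2]=27s_1-36c_1$ (the coefficient $27=3(4-1)^2$ being the classical degree of the discriminant of plane quartics). Thus \thmref{thm:pic Gdmn} yields $\Pic(\Gcal_{4,1}^2)=\ZZ\langle c_1,s_1\rangle/\langle 27s_1-36c_1\rangle$. Passing to the associated $\GG_m$-torsor as in \parref{par:Pic Pdmn}, that is, imposing in addition $c_1=s_1$, the relation collapses to $9s_1=0$, so $\Pic(\Ucal_3)\simeq\Pic(\Pcal_{4,1}^2)\simeq\ZZ/9\ZZ$, generated by the common class of $c_1$ and $s_1$. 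Moreover, under $\Ucal_3\simeq\Pcal_{4,1}^2$ the universal curve is a canonically embedded plane quartic, so $\omega_\pi\simeq\Ocal(1)$ and $\pi_*\omega_\pi$ is identified with the tautological rank-three bundle; hence $\lambda_1|_{\Ucal_3}$ equals $c_1$ up to sign, i.e. a generator of $\ZZ/9\ZZ$.

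It remains to assemble the excision sequence. By the above, $\Pic(\Mcal_3)$ is generated by $\lambda_1$ and $[\Hcal_3]$, with $\Pic(\Mcal_3)/\langle[\Hcal_3]\rangle\simeq\ZZ/9\ZZ$ and $\lambda_1$ mapping onto a generator. Writing $\Pic(\Mcal_3)=\ZZ^2/R$ with the two generators as basis, the rank-one statement forces $R$ to be free of rank one, and the isomorphism with $\ZZ/9\ZZ$ after killing $[\Hcal_3]$ forces $R=\ZZ\cdot(9,q)$ for some integer $q$; consequently $[\Hcal_3]=-(9/q)\lambda_1$ in $\operatorname{A}^1(\Mcal_3)\otimes\Q$. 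Feeding in the value $[\Hcal_3]=9\lambda_1$ of this rational class --- which is classical and, via a test family of quartic curves or a Grothendieck--Riemann--Roch computation, insensitive to the characteristic --- gives $q=-1$, so $R$ is generated by a primitive vector and $\Pic(\Mcal_3)\simeq\ZZ^2/\ZZ\cdot(9,-1)=\ZZ\lambda_1$, which is free of rank one; the identity $[\Hcal_3]=9\lambda_1$ then also holds integrally, which is the asserted corollary on the hyperelliptic class.

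The main obstacle is exactly this last assembling step: excision together with the rank-one statement determines $\Pic(\Mcal_3)$ only up to the ambiguity encoded by $q$ --- equivalently, up to deciding whether $\lambda_1$ itself, rather than a proper multiple, generates the group, and whether there is hidden torsion --- and removing this ambiguity genuinely uses the precise coefficient $9$ in the hyperelliptic relation, so one must make sure that this input is available in positive characteristic. The remaining ingredients, namely the evaluation of \corref{cor:Ddmn} at $(d,m,n)=(4,1,2)$ and the identification of the Hodge bundle on $\Ucal_3$ with the tautological bundle, are routine but should be carried out explicitly.
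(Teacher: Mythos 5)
Your computation of $\Pic(\Ucal_3)\simeq\ZZ/9\ZZ$ agrees with the paper's: the evaluation of \corref{cor:Ddmn} at $(d,m,n)=(4,1,2)$ giving $[D_{4,1}]=27s_1-36c_1$ with $\alpha=1$, the passage to the $\GG_m$-torsor via the extra relation $c_1=s_1$, and the identification of $c_1$ with $\lambda_1$ (which the paper performs via the $\GL_3$-torsor $\wt{\Mcal}_3$, with no sign ambiguity) all match \propref{prop:U3 iso P41}, \parref{par:Pic Pdmn} and \thmref{thm:pic Gdmn}. The divergence, and the genuine gap, is in the assembly step. To pin down $q$ in $R=\ZZ\cdot(9,q)$ you feed in the rational identity $[\Hcal_3]=9\lambda_1$ in $\operatorname{A}^1(\Mcal_3)\otimes\Q$, declaring it ``classical and insensitive to the characteristic''. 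But within this paper that identity is precisely \corref{cor:H3}, which is \emph{deduced from} \thmref{thm:pic M3}; the classical sources for it work over $\C$; and the justifications you gesture at (a test family of quartics, or a Grothendieck--Riemann--Roch computation for the degeneracy locus of $\Sym^2\pi_*\omega_\pi\to\pi_*\omega_\pi^{\otimes 2}$) require checking that the relevant scheme structure is reduced, which is exactly the kind of statement that needs separate care in characteristic $2$. So as written your argument is circular relative to the paper and rests on an unproved input. You correctly diagnose that without it the localization sequence together with Moriwaki's rank-one theorem does not determine the group --- for instance $\Pic(\Mcal_3)\simeq\ZZ\oplus\ZZ/3$ with $\lambda_1=(1,0)$ and $[\Hcal_3]=(3,1)$ is also compatible with the surjection onto $\ZZ/9$, with rank one, and with $9\lambda_1\in\ZZ[\Hcal_3]$ --- but you then leave the loophole open.

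The paper runs the logic in the opposite direction: from the localization sequence it concludes that $\Pic(\Mcal_3)$ is generated by $c_1=\lambda_1$ and that $[\Hcal_3]=9\lambda_1$, and only afterwards records the hyperelliptic class as \corref{cor:H3}; the divisor class is an output, not an input. If you wish to keep your structure, you must supply a characteristic-free proof of the coefficient $9$ that does not pass through the theorem --- for instance by carrying out the determinantal computation for $\Sym^2\pi_*\omega_\pi\to\pi_*\omega_\pi^{\otimes 2}$ together with a reducedness verification valid in characteristic $2$ --- or else replace this external input by an argument intrinsic to the quotient presentation of $\Ucal_3$.
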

\begin{proof}
    Putting together \propref{prop:U3 iso P41}, \parref{par:Pic Pdmn} and \thmref{thm:pic Gdmn} we deduce that:
	\[ \Pic(\Ucal_3)=\ZZ\langle c_1 \rangle/\langle 9c_1 \rangle \]
	Consider the localization exact sequence:
	\[\ZZ\cdot[\Hcal_3]\arr \Pic(\Mcal_3)\arr \Pic(\Ucal_3) \arr 0 \]
	From this we see that the cycle class of $\Hcal_3$ in $\Pic(\Mcal_3)$ is equal to $9\c_1$ and $\Pic(\Mcal_3)$ is generated by $c_1$.
	
	Consider the category fibred in groupoids $\wt{\Mcal}_3$ whose objects are pairs $(\pi:C\arr S,\alpha)$, where $\pi:C\arr S$ is an object of $\Mcal_3$ and $\alpha$ is an isomorphism between $\pi_*\omega_\pi$ and $\Ocal_S^{\oplus 3}$. 
	
	We see that $\wt{\Mcal}_3$ is a $\GL_3$-torsor over $\Mcal_3$, thus it induces a morphism $p:\Mcal_3\arr \Bcal(\GL_3)$. By construction, if we denote $\Ecal=[E/\GL_3]$ the universal $\GL_3$-torsor over $\Bcal(\GL_3)$, we have that $p^*\Ecal=\wt{\Mcal}_3$.
	
	Therefore, the cycle $c_1$ is equal to the first Chern class of the locally free sheaf of rank three associated to the torsor $\wt{\Mcal}_3$, which is precisely the Hodge bundle. This shows that $c_1=\lambda_1$ and it concludes the proof.
\end{proof}
\begin{cor}\label{cor:H3}
    We have $[\Hcal_3]=9\lambda_1$.
\end{cor}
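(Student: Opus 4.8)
The plan is to read this statement off directly from the proof of \thmref{thm:pic M3}, where everything needed already appears; the corollary is essentially a translation of an intermediate conclusion of that proof into the language of the Hodge class. First I would recall the localization exact sequence
\[ \ZZ\cdot[\Hcal_3]\arr \Pic(\Mcal_3)\arr \Pic(\Ucal_3)\arr 0 \]
used there, where $\Ucal_3=\Mcal_3\setminus\Hcal_3$. The key input is the computation of $\Pic(\Ucal_3)$: by \propref{prop:U3 iso P41} one has $\Ucal_3\simeq\Pcal_{4,1}^2$, and combining \parref{par:Pic Pdmn} with \thmref{thm:pic Gdmn} yields $\Pic(\Ucal_3)\cong\ZZ\langle c_1\rangle/\langle 9c_1\rangle$, where $c_1$ is the pullback of the first Chern class of the standard $\GL_3$-representation along the classifying morphism $\Ucal_3\arr\Bcal(\GL_3)$. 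Concretely, the relation $r_{4,1,2}(c_1,s_1)$ of \corref{cor:Ddmn} specializes (take $d=4$, $m=1$, $n=2$, so $\alpha=1$) to $27s_1-36c_1$, and imposing the extra relation $c_1=s_1$ coming from the $\GG_m$-torsor $\Pcal_{4,1}^2\arr\Gcal_{4,1}^2$ turns this into $-9c_1$; this is the only piece of arithmetic involved, and it is routine.

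The second step is to feed this into the localization sequence. Since $\Pic(\Mcal_3)$ is freely generated by $c_1$ by \thmref{thm:pic M3}, the restriction map $\Pic(\Mcal_3)\arr\Pic(\Ucal_3)$ is identified with the reduction $\ZZ\arr\ZZ/9\ZZ$, whose kernel is $9\ZZ\cdot c_1$. Exactness forces the image of $\ZZ\cdot[\Hcal_3]$ to be exactly this kernel, so $[\Hcal_3]$ is a generator of $9\ZZ\cdot c_1$, i.e. $[\Hcal_3]=\pm 9c_1$; effectivity of the divisor $\Hcal_3$ (as already used in the proof of \thmref{thm:pic M3}) fixes the sign, giving $[\Hcal_3]=9c_1$. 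Finally I would invoke the identification $c_1=\lambda_1$ established at the end of the proof of \thmref{thm:pic M3}, coming from the fact that $\wt{\Mcal}_3\arr\Mcal_3$ is precisely the $\GL_3$-torsor associated to the Hodge bundle $\pi_*\omega_\pi$; substituting gives $[\Hcal_3]=9\lambda_1$.

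There is no real obstacle here: the entire content of the corollary is contained in \thmref{thm:pic M3}, and the proof is just a matter of bookkeeping. If anything, the one point worth stating explicitly is that the coefficient is exactly $9$, not a proper divisor of it — this is guaranteed because $[\Hcal_3]$ \emph{generates} the kernel of the surjection onto $\Pic(\Ucal_3)=\ZZ/9\ZZ$, rather than merely belonging to it.
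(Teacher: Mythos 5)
Your proposal is correct and follows essentially the same route as the paper, which gives no separate proof of this corollary: the identity $[\Hcal_3]=9c_1$ together with $c_1=\lambda_1$ is established inside the proof of \thmref{thm:pic M3} via the localization sequence and the computation $\Pic(\Ucal_3)=\ZZ\langle c_1\rangle/\langle 9c_1\rangle$, exactly as you recount (including the correct specialization $27s_1-36c_1$ of $r_{4,1,2}$). Your added remark that exactness pins the coefficient to exactly $9$ (rather than a proper divisor) once freeness of $\Pic(\Mcal_3)$ is known is a sensible clarification of a point the paper leaves implicit.
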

\subsection{Genus four case}
~\\
\mypar \label{par:U4'} Let $\Mcal_4$ be the moduli stack of smooth curves of genus four, and denote $\Ucal_4$ the open substack of non-hyperelliptic curves. Observe that the complement of $\Ucal_4$ has codimension two, thus $\Pic(\Mcal_4)\simeq\Pic(\Ucal_4)$. 
Thanks to \lmref{lm:base change}, we can define a fibred category $\Ucal'_4$ over the site of schemes whose objects are triples $(\pi, i, \beta)$, where:
\begin{itemize}
	\item $\pi:C\arr S$ is an object of $\Ucal_4$.
	\item $i:C\hookrightarrow\PP(\pi_*\omega_\pi)$ is a closed embedding.
	\item $\beta$ is an isomorphism between $i^*\Ocal(1)$ and $\omega_\pi$.
\end{itemize}
There is an obvious morphism $\Ucal_4'\arr \Ucal_4$. We also have a morphism in the opposite direction: indeed, given a smooth family of genus four, non-hyperelliptic curves $\pi:C\arr S$, by \lmref{lm:base change} we have a canonical, surjective morphism $\pi^*\pi_*\omega_\pi\arr \omega_\pi$, which in turn induces a canonical embedding $i:C\hookrightarrow\PP(\pi_*\omega_\pi)$ and an isomorphism $\beta:i^*\Ocal(1)\simeq\omega_\pi$. As everything is functorial, we get the claimed morphism $\Ucal_4 \arr \Ucal'_4$. It is almost immediate to check that the two morphisms are equivalences of stacks.

\mypar Observe that there is an invertible sheaf defined over the stack $\Fcal_{a,b}^n$, functorially defined as follows:
\[ \Lcal_{a,b}: (V\arr S,X\subset \PP(V)) \longmapsto \pr|_{X*}(\det(\Ical_X|_X)\otimes\Ocal_X(a+b))\otimes\det(V) \] 
where $\pr:\PP(V)\arr S$ is the canonical projection. The fact that the sheaf on the right is invertible easily follows from the cohomology and base change theorem. Let $\Qcal_{a,b}^n$ be the $\GG_m$-torsor associated to $\Lcal_{a,b}$. By definition, the objects of $\Qcal_{a,b}^n$ are triples $(V\arr S,X\subset\PP(V),\gamma)$, where $\gamma$ is an isomorphism between $\pr|_{X*}(\det(\Ical_X|_X)\otimes\Ocal_X(a+b))\otimes\det(V)$ and $\Ocal_S$.

\begin{prop}\label{prop:U4 iso P23}
	We have that $\Ucal_4\simeq\Qcal_{2,3}^3$.
\end{prop}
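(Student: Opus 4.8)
The plan is to mimic exactly the strategy used for $\Ucal_3 \simeq \Pcal_{4,1}^2$ in \propref{prop:U3 iso P41}, replacing the canonical quartic in $\PP^2$ by the canonical curve of genus four in $\PP^3$. By \parref{par:U4'} it suffices to produce an equivalence between $\Ucal_4'$ and $\Qcal_{2,3}^3$. The key geometric input is the classical fact that the canonical model of a smooth non-hyperelliptic genus four curve is a complete intersection of a quadric and a cubic in $\PP^3$; in families this says precisely that for $\pi:C\arr S$ an object of $\Ucal_4$ with canonical embedding $i:C\hookrightarrow\PP(\pi_*\omega_\pi)$, the pair $(\VV(\pi_*\omega_\pi)\arr S, i(C)\subset\PP(\pi_*\omega_\pi))$ is an object of $\Fcal_{2,3}^3$ (one checks flatness of the two defining hypersurfaces fibrewise, using the Hilbert polynomial, and that the geometric fibres are global complete intersections of bidegree $(2,3)$; the relevant cohomology and base change vanishing follows from \lmref{lm:base change} together with standard vanishing on the canonical curve).

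The next step is to match up the torsor structures. Starting from a triple $(\pi,i,\beta)\in\Ucal_4'(S)$, the isomorphism $\beta:i^*\Ocal(1)\simeq\omega_\pi$ is a nowhere-vanishing global section of $i^*\Ocal(-1)\otimes\omega_\pi$. Running through the same chain of identifications as in the proof of \propref{prop:U3 iso P41} — pushing forward along $i$, using $i_*i^!(-)$ so that $i^*\Ocal(-1)\otimes\omega_\pi$ becomes $\Ical_C^\vee\otimes\omega_{\PP(\pi_*\omega_\pi)}(-1)$ on $\PP(\pi_*\omega_\pi)$, and then invoking the canonical isomorphism $\omega_{\PP(\pi_*\omega_\pi)}\simeq\Ocal(-4)\otimes\det((\pi_*\omega_\pi)^\vee)$ (here the relative dimension is $3$, so $-4$ replaces the $-3$ of the genus-three case) — one sees that $\beta$ is the same datum as a trivialization $\gamma$ of $\pr|_{C*}(\det(\Ical_C|_C)\otimes\Ocal_C(5))\otimes\det(\pi_*\omega_\pi)$. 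Since $a+b=2+3=5$, this is exactly a trivializing section of the sheaf $\Lcal_{2,3}$ of $\parref{par:U4'}$ pulled back to $C$, so the triple $(\VV(\pi_*\omega_\pi)\arr S, i(C), \gamma)$ is an object of $\Qcal_{2,3}^3(S)$. This construction is visibly functorial, giving a morphism $\Ucal_4'\arr\Qcal_{2,3}^3$.

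For the inverse, given $(V\arr S, X\subset\PP(V),\gamma)\in\Qcal_{2,3}^3(S)$, reverse the identifications: $\gamma$ yields an isomorphism $\beta: i^*\Ocal(1)\simeq\omega_{X/S}$ (where $i$ is the given embedding $X\hookrightarrow\PP(V)$), and pushing $\beta$ forward along $\pr|_X$ and using the canonical isomorphism $\pr_*\Ocal(1)\simeq V$ identifies $V$ with $\pr|_{X*}\omega_{X/S}$, under which $i$ becomes the canonical embedding $X\hookrightarrow\PP(\pr|_{X*}\omega_{X/S})$. Assigning $(V\arr S,X,\gamma)\mapsto (\pr|_X, i, \beta)$ defines a morphism $\Qcal_{2,3}^3\arr\Ucal_4'$, and the two morphisms are mutually inverse by inspection. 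I expect the only real subtlety — the "main obstacle" — to be bookkeeping: checking that $X$ is genuinely a \emph{complete intersection} of bidegree $(2,3)$ in families (not just fibrewise), i.e.\ that the quadric and cubic through the canonical curve vary in a locally free way over $S$ and that their schematic intersection is $X$ with the expected flatness, which again reduces to a cohomology-and-base-change argument exactly as in the construction of the inverse morphism in \propref{prop:Fcal quot}. Once this is in place everything else is formal.
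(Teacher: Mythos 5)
Your proposal is correct and follows essentially the same route as the paper: reduce to $\Ucal_4'$ via \parref{par:U4'}, use the classical fact that the canonical model is a $(2,3)$ complete intersection in $\PP^3$, and convert $\beta$ into a trivialization $\gamma$ of $\Lcal_{2,3}$ through the same chain of identifications (with $\omega_{\PP(\pi_*\omega_\pi)}\simeq\Ocal(-4)\otimes\det((\pi_*\omega_\pi)^\vee)$ and $a+b=5$), then reverse the process for the inverse. The extra care you flag about flatness of the quadric and cubic in families is handled in the paper by the cohomology-and-base-change argument already present in the proof of \propref{prop:Fcal quot}, exactly as you anticipate.
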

\begin{proof}
	From \parref{par:U4'}, we see that we can equivalently show that $\Ucal_4'\simeq\Qcal_{2,3}^3$.
	We construct a morphism $\Ucal'_4\arr\Qcal_{2,3}^3$ as follows: given an object $(\pi, i, \beta)$ of $\Ucal'_4$, we send it to the object $(\VV(\pi_*\omega_\pi)\arr S,i(C)\subset\PP(\pi_*\omega_\pi),\gamma)$, where $\VV(\pi_*\omega_\pi)$ denotes the total space of the vector bundle associated to the locally free sheaf $\pi_*\omega_\pi$. We are using here the well known fact that $i(C)$ is a family of smooth complete intersections of bidegree $(2,3)$. The element $\gamma$ is constructed as follows: the isomorphism $\beta$ can be seen as a global, everywhere non-vanishing, section of $i^*\Ocal(-1)\otimes\omega_\pi$. We have the following chain of easy identifications:
	\begin{align*}
	H^0(C,i^*\Ocal(-1)\otimes\omega_\pi)&=H^0(\PP(\pi_*\omega_\pi),i_*(i^*(\Ocal(-1)\otimes \Ical_C^{\vee} \otimes\omega_{\PP(\pi_*\omega_\pi)})))\\
	&= H^0(\PP(\pi_*\omega_\pi), \Ical_C^{\vee}\otimes \omega_{\PP(\pi_*\omega_\pi)}(-1))
	\end{align*}
	Using the canonical isomorphism $\omega_{\PP(\pi_*\omega_\pi)}\simeq \Ocal(-4)\otimes\det((\pi_*\omega_\pi)^\vee)$, we deduce that the isomorphism $\beta$ can actually be regarded as a trivialization $\gamma$ of the invertible sheaf $\pi_*(i^*(\Ical_C(5)))\otimes\det(\pi_*\omega_\pi)$. As everything is functorial, we have defined a morphism $\Ucal'_4\arr\Qcal_{2,3}^3$.
	
	To define the inverse morphism $\Qcal_{2,3}^3\arr \Ucal'_4$, observe that given an object $(V\arr S,i:X\subset \PP(V),\gamma)$ of $\Qcal_{2,3}^3$, we can obtain an isomorphism $\beta:i^*\Ocal(1)\simeq\omega_{X/S}$ from $\gamma$ by simply going backward in the chain of identifications above. Pushing forward $\beta$ to $S$, we obtain an isomorphism between $V$ and $\pr|_{X*}\omega_{X/S}$. Thus it makes sense to define a morphism $\Qcal_{2,3}^3\arr \Ucal_4'$ by sending a triple $(V\arr S,i:X\subset \PP(V),\gamma)$ to the triple $(\pr|_X:X\arr S,i:X\hookrightarrow\PP(V)\simeq\PP(\pr_*\omega_{X/S}),\beta)$. It is easy to see that the two morphisms that we have defined are equivalences of stacks.
\end{proof}

\mypar From \cite{Vis98}*{pg. 638} we know that the pullback morphism:
\[ \rho^*: \Pic(\Fcal_{a,b}^n)\arr \Pic(\Qcal_{a,b}^n) \]
is surjective, with kernel equal to the first Chern class of $\Lcal_{a,b}$. Recall from \propref{prop:Fcal quot} that we have an isomorphism between $\Fcal_{a,b}^n$ and $[U_{a,b}\setminus D_{a,b}/\GL_4]$. Call $\pi:C_{a,b}\arr U_{a,b}$ the universal family of complete intersections of bidegree $(a,b)$: by definition it is a closed subscheme of $U_{a,b}\times\PP^3$. If $\Ical_{C_{a,b}}$ denotes its sheaf of ideals, then we have:
\[ \Lcal_{a,b}\simeq [\pi_*(\det(\Ical_{C_{a,b}}|_{C_{a,b}})\otimes\pr_2^*\Ocal(a+b)|_{C_{a,b}})\otimes\det(E))/\GL_4 ] \]
Recall that: \[\Ical_{C_{a,b}}|_{C_{a,b}}=(\pr_1^*\Ocal_{\PP(W_a)}(-1)\otimes\pr_2^*\Ocal_{\PP^3}(-a))\oplus(\pr_1^*\Ocal_{\PP(V_{a,b})}(-1)\otimes\pr_2^*\Ocal_{\PP^3}(-b))|_{C_{a,b}}\]
Therefore $\det(\Ical_{C_{a,b}}|_{C_{a,b}})\otimes\pr_2^*\Ocal(a+b)|_{C_{a,b}}=\pr_1^*(\Ocal_{\PP(W_a)}(-1)\otimes\Ocal_{\PP(V_{a,b})}(-1))|_{C_{a,b}}$ and we deduce:
\[ \Pic(\Qcal_{a,b}^n)\simeq \Pic(\Fcal_{a,b}^n)/\langle c_1-u-v \rangle \]
where $u=c_1(\Ocal_{\PP(W_a)}(1))$ and $v=c_1(\Ocal_{\PP(V_{a,b})}(1))$.

\begin{thm}\label{thm:pic M4}
	Let $\lambda_1$ be the first Chern class of the Hodge bundle over $\Mcal_4$. Then the Picard group of $\Mcal_4$ is freely generated by $\lambda_1$, without any assumption on the characteristic of the base field.
\end{thm}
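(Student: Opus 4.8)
The plan is to run the genus-three argument again, with the moduli of smooth $(2,3)$-complete intersections in $\PP^3$ in place of the plane quartics used for $\Mcal_3$. First, the hyperelliptic locus $\Mcal_4\setminus\Ucal_4$ has codimension two, so restriction is an isomorphism $\Pic(\Mcal_4)\xrightarrow{\ \sim\ }\Pic(\Ucal_4)$ --- unlike genus three, no localization-sequence term is needed. By \propref{prop:U4 iso P23} we have $\Ucal_4\simeq\Qcal_{2,3}^3$, and by the discussion just before the statement of this theorem, $\Pic(\Qcal_{2,3}^3)\simeq\Pic(\Fcal_{2,3}^3)/\langle c_1-u-v\rangle$. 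Feeding in \thmref{thm:Pic(Fa,b)} gives the presentation
\[ \Pic(\Mcal_4)\ \simeq\ \ZZ\langle c_1,u,v\rangle\big/\big\langle\, r_{2,3,3}(c_1,u,v),\ c_1-u-v\,\big\rangle . \]

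The core computation is to make $r_{2,3,3}$ explicit by specializing the chain \lmref{lm:class Xa,b}--\lmref{lm:c2}--\propref{prop:relation} to $a=2$, $b=3$, $n=3$; this is finite and elementary, the one point of care being to retain the $u$- and $v$-contributions carried by the Segre factor $c^{\GL_4}(\Ecal_{2,3})^{-1}$. The outcome is, up to an overall sign,
\[ r_{2,3,3}(c_1,u,v)\ =\ 42\,c_1-33\,u-34\,v , \]
a primitive integer vector, so that already $\Pic(\Fcal_{2,3}^3)\cong\ZZ^2$. Imposing $c_1=u+v$ turns this into the single relation $9u+8v=0$, whence $\Pic(\Mcal_4)\simeq\ZZ\langle u,v\rangle/\langle 9u+8v\rangle\cong\ZZ$, and since $\det\left(\begin{smallmatrix}1&1\\ 9&8\end{smallmatrix}\right)=-1$ the class $c_1=u+v$ is a generator. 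Equivalently, the $2\times3$ matrix of relations with rows $(42,-33,-34)$ and $(1,-1,-1)$ has $2\times2$ minors $\pm9,\pm8,\pm1$, of greatest common divisor $1$, which at once shows $\Pic(\Mcal_4)$ is infinite cyclic, torsion-free, and generated by $c_1$.

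It remains to identify this generator with $\lambda_1$, and this is done exactly as at the end of the proof of \thmref{thm:pic M3}. The frame bundle $\wt{\Mcal}_4$ of the Hodge bundle $\pi_*\omega_\pi$ (of rank $4$) is a $\GL_4$-torsor over $\Mcal_4$, hence is classified by a morphism $p\colon\Mcal_4\arr\Bcal(\GL_4)$ with $p^*\Ecal=\wt{\Mcal}_4$; under the equivalences above, $c_1$ is the first Chern class of the standard representation of $\GL_4$, which in \propref{prop:U4 iso P23} is the bundle $V=\pi_*\omega_\pi$, so $c_1$ pulls back to $c_1(\pi_*\omega_\pi)=\lambda_1$. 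Thus $\Pic(\Mcal_4)=\ZZ\lambda_1$, with no hypothesis on the characteristic: the canonical model of a non-hyperelliptic genus-four curve as a smooth $(2,3)$-complete intersection (via \lmref{lm:base change}), the quotient presentation of $\Fcal_{2,3}^3$ (\propref{prop:Fcal quot}), and the Thom--Porteous and Chern-class computations of Section \ref{sec:Fab} are all characteristic-free.

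The step I expect to be the main obstacle is the evaluation of $r_{2,3,3}$: the bookkeeping in \propref{prop:relation} is error-prone, and the whole theorem rests on the two integer facts $\gcd(9,8)=1$ and $9-8=\pm1$ --- had $c_1$ landed on a proper multiple of a generator, the statement would fail --- so these coefficients must be obtained by an honest calculation rather than assumed.
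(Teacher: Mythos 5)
Your proposal follows the paper's own proof essentially verbatim: reduce to $\Ucal_4\simeq\Qcal_{2,3}^3$ (using that the hyperelliptic locus has codimension two), present $\Pic(\Mcal_4)$ as $\ZZ\langle c_1,u,v\rangle/\langle 42c_1-33u-34v,\ c_1-u-v\rangle$ via \thmref{thm:Pic(Fa,b)}, and identify $c_1$ with $\lambda_1$ through the $\GL_4$-torsor of the Hodge bundle; your relation agrees (up to sign) with the paper's $33u+34v-42c_1$. The only difference is that you spell out the elementary-divisor check (the $2\times 2$ minors $\pm 9,\pm 8,\pm 1$ having greatest common divisor $1$) that the paper leaves implicit when asserting the quotient is freely generated by $c_1$.
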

\begin{proof}
	From \parref{par:U4'} and \propref{prop:U4 iso P23} we know that:
	\[ \Pic(\Mcal_4)=\Pic(\Fcal_{2,3}^3)/\langle c_1-u-v \rangle \]
	Applying \thmref{thm:Pic(Fa,b)} when $a=2$, $b=3$ and $n=3$ we obtain:
	\[ \Pic(\Fcal_{2,3}^3)=\ZZ\langle c_1,u,v \rangle/\langle33u+34v-42c_1\rangle \]
	Therefore we get that $\Pic(\Mcal_4)$ is freely generated by $c_1$. By definition $c_1$ is the first Chern class of the vector bundle associated to the $\GL_4$-torsor $\wt{\Fcal}_{2,3}^3$ that we introduced in the proof of \propref{prop:Fcal quot}. If we pull back this torsor along the morphism $\Ucal_4\arr \Fcal_{2,3}^3$ that we have constructed in \parref{par:U4'} and \propref{prop:U4 iso P23}, we get exactly the $\GL_4$-torsor associated to the Hodge bundle. This implies that $c_1=\lambda_1$ and concludes the proof.
\end{proof}

\mypar Let $\Mcal_4^{\rm ev}$ denote the closed substack of $\Mcal_4$ that parametrizes those curves having an even theta characteristic. It is well known that $\Mcal_4^{\rm ev}$ has codimension one. Therefore, to compute the class $[\Mcal_4^{\rm ev}]$ in the Picard group of $\Mcal_4$, we can equivalently compute the class of its restriction to $\Pic(\Ucal_4)$.

Recall that a family of smooth curves of genus four having an even theta characteristic is canonically embedded as a complete intersection of a rank three quadric and a cubic. Let $\Delta_2$ denotes the divisor in $\PP(W_2)$ parametrizing rank three quadrics: then it follows that, in order to compute $[\Mcal_4^{rm ev}]$, we only have to determine the cycle class of $\Delta_2$ in $\Pic^{\GL_4}(\PP(W_2))$ and then use the relations $33u+34v-42c_1=0$ and $c_1-u-v=0$ to reduce the expression that we had found to a multiple of $c_1=\lambda_1$.

Applying \cite{FulVis}*{Pr. 4.3} we obtain that $[\Delta_2]=4u-c_1$. Putting everything together, we deduce the following result of Teixidor i Bigas (see \cite{Tei}*{Pr. 3.1}).

\begin{cor}\label{cor:M4ev}
	We have $[\Mcal_4^{\rm ev}]=34\lambda_1$.
\end{cor}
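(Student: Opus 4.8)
The plan is to read off $[\Mcal_4^{\rm ev}]$ from the class of the quadric discriminant already assembled in the paragraph preceding the statement, so that the argument is essentially bookkeeping once one classical geometric fact is in place. First I would reduce to the non-hyperelliptic locus: $\Mcal_4^{\rm ev}$ is a divisor, and by \parref{par:U4'} the complement $\Mcal_4\setminus\Ucal_4$ has codimension two, so the restriction $\Pic(\Mcal_4)\xrightarrow{\sim}\Pic(\Ucal_4)$ sends $[\Mcal_4^{\rm ev}]$ to $[\Mcal_4^{\rm ev}\cap\Ucal_4]$. It then suffices to compute that class inside $\Ucal_4\simeq\Qcal_{2,3}^3$, and, pulling back further, inside $\Fcal_{2,3}^3=[(\PP(V_{2,3})\setminus D_{2,3})/\GL_4]$, keeping in mind that $\Pic(\Qcal_{2,3}^3)=\Pic(\Fcal_{2,3}^3)/\langle c_1-u-v\rangle$.

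Next I would identify the divisor. A smooth non-hyperelliptic genus four curve is canonically a complete intersection $Q\cap F\subset\PP^3$ of a quadric and a cubic, and it carries a vanishing even theta characteristic precisely when $Q$ has rank three; in that case the ruling of the cone $Q$ cuts out the corresponding $g^1_3$, which is the theta characteristic in question. Under the presentation of \propref{prop:Fcal quot} the assignment ``$X\mapsto$ its defining quadric'' is the bundle projection $\PP(V_{2,3})\to\PP(W_2)$, so $\Mcal_4^{\rm ev}\cap\Ucal_4$ is the preimage of the rank-three discriminant hypersurface $\Delta_2\subset\PP(W_2)$. One should check that this preimage is still reduced and is not absorbed into $D_{2,3}$ (a smooth curve can perfectly well lie on a cone), so that no spurious multiplicity appears and $[\Mcal_4^{\rm ev}]$ is genuinely the pullback of $[\Delta_2]$.

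Finally I would compute. By \cite{FulVis}*{Pr. 4.3} one has $[\Delta_2]=4u-c_1$ in $\Pic^{\GL_4}(\PP(W_2))$; pulling this back identifies $[\Mcal_4^{\rm ev}]$ with $4u-c_1$ inside $\Pic(\Fcal_{2,3}^3)=\ZZ\langle c_1,u,v\rangle/\langle 33u+34v-42c_1\rangle$ coming from \thmref{thm:Pic(Fa,b)}. Passing to $\Pic(\Mcal_4)=\Pic(\Fcal_{2,3}^3)/\langle c_1-u-v\rangle$, which by \thmref{thm:pic M4} is freely generated by $c_1=\lambda_1$, the two relations $33u+34v-42c_1=0$ and $c_1-u-v=0$ determine $u$ and $v$ as explicit integer multiples of $\lambda_1$; substituting them into $4u-c_1$ and simplifying yields the announced value $[\Mcal_4^{\rm ev}]=34\lambda_1$, recovering Teixidor i Bigas' computation in \cite{Tei}. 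The only input not already available in the paper is the theta-characteristic dictionary of the middle paragraph together with the accompanying reducedness check; everything else is tracking the hyperplane classes $u$, $v$ and $c_1$ through the chain of stack equivalences, so the main — and fairly mild — obstacle is making sure the equivariant twist in $[\Delta_2]$ and the three generators are normalized consistently across $\PP(W_2)$, $\Fcal_{2,3}^3$, $\Qcal_{2,3}^3$ and $\Mcal_4$.
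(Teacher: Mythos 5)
Your argument is precisely the paper's: restrict to $\Ucal_4$ using that the hyperelliptic locus has codimension two, identify $\Mcal_4^{\rm ev}\cap\Ucal_4$ with the pullback to $\PP(V_{2,3})\setminus D_{2,3}$ of the rank-three discriminant $\Delta_2\subset\PP(W_2)$, take its equivariant class $4u-c_1$ from Fulghesu--Vistoli, Pr.~4.3, and reduce modulo the relations $33u+34v-42c_1$ and $c_1-u-v$ to a multiple of $c_1=\lambda_1$. The only difference is that you spell out the reducedness of the preimage and its non-containment in $D_{2,3}$, which the paper leaves implicit; otherwise the two proofs coincide step for step.
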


\subsection{Genus five case}
~\\
\mypar \label{par:U5'} Let $\Mcal_5$ be the moduli stack of smooth curves of genus five and let $\Tcal_5$ denotes the closed substack of trigonal curves. It is well known that this closed substack has codimension one. Let $\Ucal_5$ be the complement of $\Tcal_5$ in $\Mcal_5$, i.e. the moduli stack of smooth, non-trigonal curves of genus five.

Again by \lmref{lm:base change}, we can consider the fibred category $\Ucal'_5$ over the site of schemes whose objects are triples $(\pi, i, \beta)$, where:
\begin{itemize}
	\item $\pi:C\arr S$ is an object of $\Ucal_5$.
	\item $i:C\hookrightarrow\PP(\pi_*\omega_\pi)$ is a closed embedding.
	\item $\beta$ is an isomorphism between $i^*\Ocal(1)$ and $\omega_\pi$.
\end{itemize}
Using the same argument of \parref{par:U4'} we see that the two stacks $\Ucal_5$ and $\Ucal_5'$ are equivalent.

\mypar Recall from \parref{par:Pdmn} that there is a $\GG_m$-torsor $\Pcal\dmn^n$ over $\Gcal_{d,m}^n$ whose objects are triples $(V\arr S,X\subset\PP(V),\gamma)$, where the pair $(V\arr S,X\subset\PP(V))$ is an object of $\Gcal_{d,m}^n$ and $\gamma$ is a trivializing section of $\pr_*(\det(\Ical_X|_X)\otimes\Ocal_X(mn))\otimes\det(V)$.

\begin{prop}\label{prop:U5 iso P234}
	Set $n=4$. Then we have $\Ucal_5\simeq \Pcal_{2,3}^4$.
\end{prop}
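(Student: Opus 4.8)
The proof will be a transcription of that of \propref{prop:U3 iso P41}, with plane quartics replaced by smooth complete intersections of three quadrics in $\PP^4$. By \parref{par:U5'} it suffices to construct an equivalence $\Ucal_5'\simeq\Pcal_{2,3}^4$. The geometric input is the classical fact that the canonical model of a smooth, non-hyperelliptic, non-trigonal curve of genus five is a smooth complete intersection of three quadrics in $\PP^4$, and, conversely, that every smooth complete intersection of three quadrics in $\PP^4$ arises in this way; this holds over an arbitrary base field (Enriques--Babbage--Petri) and, being an open condition on the fibres, it propagates to families. Hence, given an object $(\pi,i,\beta)$ of $\Ucal_5'$, the pair $\big(\VV(\pi_*\omega_\pi)\arr S,\ i(C)\subset\PP(\pi_*\omega_\pi)\big)$ is an object of $\Gcal_{2,3}^4$, and it remains to upgrade this assignment to a morphism into the $\GG_m$-torsor $\Pcal_{2,3}^4$.

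To that end I would turn $\beta$ into a trivialization $\gamma$ of $\Lcal_{2,3}^4$ exactly as in the genus three case. Viewing $\beta$ as a nowhere vanishing section of $i^*\Ocal(-1)\otimes\omega_\pi$, one uses the identifications
\[
H^0\big(C,\,i^*\Ocal(-1)\otimes\omega_\pi\big)=H^0\big(\PP(\pi_*\omega_\pi),\,\Ical_C^{\vee}\otimes\omega_{\PP(\pi_*\omega_\pi)}(-1)\big),
\]
the canonical isomorphism $\omega_{\PP(\pi_*\omega_\pi)}\simeq\Ocal(-5)\otimes\det\big((\pi_*\omega_\pi)^{\vee}\big)$, and the fact that $\Ical_C|_C\simeq\pr_1^*\Tcal\otimes\pr_2^*\Ocal(-2)$ (with $\Tcal$ the rank-three tautological subsheaf), so that $\det(\Ical_C|_C)\otimes\Ocal_C(6)$ is the pullback to $C$ of a line bundle from $S$. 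A short computation then identifies $\beta$ with a trivializing section of $\pi_*\big(\det(\Ical_C|_C)\otimes\Ocal_C(6)\big)\otimes\det(\pi_*\omega_\pi)$, which is exactly $\Lcal_{2,3}^4$. Since all the sheaves in sight are functorial, this defines a morphism $\Ucal_5'\arr\Pcal_{2,3}^4$. For the inverse, given a triple $(V\arr S,\,X\subset\PP(V),\,\gamma)$ of $\Pcal_{2,3}^4$ I would run the chain of identifications backwards to recover an isomorphism $\beta\colon i^*\Ocal(1)\simeq\omega_{X/S}$, push it forward to identify $V$ with $\pr|_{X*}\omega_{X/S}$ via the canonical $\pr_*\Ocal(1)\simeq V$, and use this identification to regard $X$ as a canonically embedded family $X\into\PP(\pr|_{X*}\omega_{X/S})$; this produces an object $(\pr|_X,i,\beta)$ of $\Ucal_5'$. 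A routine check shows that the two morphisms are mutually quasi-inverse.

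The one genuinely substantial point is the geometric input of the first paragraph: one must know that the locus of $\Mcal_5$ over which the three quadrics through the canonical curve cut out the curve itself \emph{and} have smooth complete intersection is precisely $\Ucal_5$; equivalently, that under the canonical embedding the complement of $\Ucal_5$ in $\Mcal_5$ matches the discriminant divisor removed in the presentation of $\Gcal_{2,3}^4$ as a quotient stack in \propref{prop:Gdmn quot}, and that this description is stable under base change and independent of the characteristic. Granting this, the rest is the sheaf-theoretic bookkeeping already performed, mutatis mutandis, in \propref{prop:U3 iso P41}.
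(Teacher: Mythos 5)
Your proposal is correct and follows essentially the same route as the paper: reduce to $\Ucal_5'$, use the Enriques--Babbage--Petri description of the canonical model of a non-trigonal genus five curve to land in $\Gcal_{2,3}^4$, convert $\beta$ into a trivialization of $\Lcal_{2,3}$ via adjunction and the isomorphism $\omega_{\PP(\pi_*\omega_\pi)}\simeq\Ocal(-5)\otimes\det((\pi_*\omega_\pi)^{\vee})$, and run the identifications backwards for the inverse. The only cosmetic difference is that you make the geometric input and the computation of $\det(\Ical_C|_C)\otimes\Ocal_C(6)$ explicit, whereas the paper defers both to the genus three and four cases.
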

\begin{proof}
	From \parref{par:U5'} we see that we can equivalently show that $\Ucal_5'$ is isomorphic to $\Pcal_{2,3}^4$.
	First we construct a morphism $\Ucal_5'\arr \Pcal_{2,3}^4$. Recall that the canonical model of a family $\pi:C\arr S$ of smooth, non-trigonal curves of genus five is a smooth complete intersection of three quadrics, thus given an object $(\pi,i,\beta)$ of $\Ucal_5'$ the pair $(\VV(\pi_*\omega_\pi)\arr S,i(C)\subset\PP(\pi_*\omega_\pi))$ is an object of $\Gcal_{2,3}^4$.
	
	Moreover, using the same argument of the proof of \propref{prop:U4 iso P23} we see that the isomorphism $\beta$ induces a trivializing section of $\pi_*(\det(\Ical_{C}|_C)\otimes\Ocal_C(6))\otimes\det(\pi_*\omega_\pi)$. As everything is functorially well behaved, this defines a morphism $\Ucal_5'\arr \Pcal_{2,3}^4$.
	
	To construct the inverse morphism, consider an object $(V\arr S, X\subset \PP(V),\gamma)$ of $\Pcal_{2,3}^4$: as in the proof of \propref{prop:U4 iso P23}, the trivializing section $\gamma$ induces an isomorphism $\beta:i^*\Ocal(1)\simeq \omega_{X/S}$, that pushed forward to $S$ allows us to identify $V$ with $\pr|_{X*}\omega_{X/S}$, using the canonical isomorphism $\pr_*\Ocal(1)\simeq V$. We use this identification to define the closed embedding $i:X\subset\PP(V)\simeq\PP(\pr|_{X*}\omega_{X/S})$. Therefore, we can construct the morphism $\Pcal_{2,3}^4\arr \Ucal_5'$ by sending a triple $(V\arr S, X\subset\PP(V),\gamma)$ to the object $(\pr|_X,i,\beta)$.
	
	It is immediate to check that the two morphisms that we have defined are equivalences of stacks.
\end{proof}
We are ready to prove tha main theorem of this subsection:

\begin{thm}\label{thm:pic M5}
	Let $\lambda_1$ be the first Chern class of the Hodge bundle over $\Mcal_5$. Then the Picard group of $\Mcal_5$ is freely generated by $\lambda_1$, without any assumption on the characteristic of the base field.
\end{thm}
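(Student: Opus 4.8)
The strategy is to repeat verbatim the argument already run for $\Mcal_3$ and $\Mcal_4$. By \parref{par:U5'} the open substack $\Ucal_5\subset\Mcal_5$ of non-trigonal curves is equivalent to $\Ucal_5'$, and \propref{prop:U5 iso P234} identifies $\Ucal_5'$ with the $\GG_m$-torsor $\Pcal_{2,3}^4$ over $\Gcal_{2,3}^4$; as recalled in \parref{par:Pic Pdmn}, passing to this torsor imposes exactly the relation $c_1=s_1$, so that $\Pic(\Ucal_5)\simeq\Pic(\Gcal_{2,3}^4)/\langle c_1-s_1\rangle$. The first step is therefore to make \thmref{thm:pic Gdmn} explicit for $(d,m,n)=(2,3,4)$. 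Here $n-m=1$ is odd, so the constant $\alpha$ of \corref{cor:Ddmn} equals $1$ regardless of the characteristic; evaluating the two sums of that corollary (a couple of lines of binomial arithmetic) produces the single relation $40\,s_1-48\,c_1$. Setting $s_1=c_1$ turns this into $-8\,c_1$, whence $\Pic(\Ucal_5)\simeq\ZZ\langle c_1\rangle/\langle 8\,c_1\rangle\simeq\ZZ/8\ZZ$.

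Next I would pass from $\Ucal_5$ to $\Mcal_5$ via the localization exact sequence
\[ \ZZ\cdot[\Tcal_5]\longrightarrow\Pic(\Mcal_5)\longrightarrow\Pic(\Ucal_5)\longrightarrow 0, \]
which applies because the trigonal locus $\Tcal_5$ is an irreducible divisor in $\Mcal_5$. The restriction map is surjective, and the generator $c_1$ of $\Pic(\Ucal_5)$ is the restriction of the first Chern class of the Hodge bundle $\lambda_1$: this identification is obtained exactly as in the proof of \thmref{thm:pic M3}, by viewing the $\GL_5$-torsor of trivializations of $\pi_*\omega_\pi$ as the pullback of the universal torsor over $\Bcal(\GL_5)$, so that $c_1$ is by construction $c_1$ of the associated rank-five bundle, which is the Hodge bundle. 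Consequently $\Pic(\Mcal_5)$ is generated by $\lambda_1$ together with $[\Tcal_5]$, and $8\lambda_1$ lies in the subgroup generated by $[\Tcal_5]$.

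To conclude that $\Pic(\Mcal_5)=\ZZ\lambda_1$ — and, incidentally, that $[\Tcal_5]=8\lambda_1$, which is \corref{cor:T5} — I would combine the previous paragraph with the facts that $\Pic(\Mcal_5)$ is finitely generated and that $\lambda_1$ is non-torsion, the latter because $\Pic(\Mcal_5)\otimes\Q$ has rank one and is generated by $\lambda_1$ over any base field by \cite{Mor} (equivalently, because the Hodge bundle has positive degree on a complete non-isotrivial family). The only genuine computation in all of this is the evaluation of \corref{cor:Ddmn} at $(2,3,4)$, which is short; the step that deserves the most care is this last one, where one must leverage the rank-one rational Picard group, together with the fact that $\lambda_1$ is globally defined and restricts to a generator of the cyclic group $\Pic(\Ucal_5)$, in order to rule out any extra torsion in $\Pic(\Mcal_5)$ and pin down the class of the trigonal divisor.
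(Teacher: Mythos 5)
Your proposal follows the paper's proof essentially verbatim: the same identification $\Ucal_5\simeq\Pcal_{2,3}^4$ via \propref{prop:U5 iso P234}, the same evaluation of \corref{cor:Ddmn} at $(d,m,n)=(2,3,4)$ (your arithmetic $40s_1-48c_1$, hence $8c_1=0$ after imposing $s_1=c_1$, is correct), the same localization sequence for the trigonal divisor, and the same identification $c_1=\lambda_1$ through the $\GL_5$-torsor of trivializations of $\pi_*\omega_\pi$. The only divergence is that you make explicit (via Moriwaki's rank-one statement and the non-torsion of $\lambda_1$) the final passage from $\Pic(\Ucal_5)\simeq\ZZ/8\ZZ$ to the freeness of $\Pic(\Mcal_5)$, a step the paper compresses into ``we easily conclude,'' so your argument is at least as complete as the one in the text.
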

\begin{proof}
	Putting together \propref{prop:U5 iso P234}, \parref{par:Pic Pdmn} and \thmref{thm:pic Gdmn} we deduce that:
	\[ \Pic(\Ucal_5)=\ZZ\langle c_1 \rangle/\langle 8c_1 \rangle \]
	From the exact sequence
	\[\ZZ\cdot[\Tcal_5]\arr \Pic(\Mcal_5) \arr \Pic(\Ucal_5) \arr 0 \]
	we easily conclude that $\Pic(\Mcal_5)$ is freely generated by $c_1$. The cycle $c_1$ comes from the Picard group of $\Gcal_{2,3}^4$: in \propref{prop:Gdmn quot} we showed in particular that $\Gcal_{2,3}^4$ has a $\GL_5$-torsor over it, which is the scheme $U_{2,3}\setminus D_{2,3}$, that can be described as the stack in sets whose objects are triples $(V\arr S,X\subset\PP(V),\alpha)$ where $(V\arr S,X\subset\PP(V))$ is an object of $\Gcal_{2,3}^4$ and $\alpha$ is an isomorphism between the locally free sheaf associated to $V$ and $\Ocal_S^{\oplus 5}$.
	
	The cycle $c_1$ is the first Chern class of the locally free sheaf associated to this $\GL_5$-torsor. This locally free sheaf can be described as the functor:
	\[ (V\arr S,X\subset\PP(V)) \longmapsto (V\arr S) \]
	It is immediate to check that if we pull back this sheaf along the morphism $\Ucal_5\arr \Gcal_{2,3}^4$ that we constructed in the proof of \propref{prop:U5 iso P234} we recover the Hodge bundle restricted to $\Ucal_5$, thus $c_1=\lambda_1$. This concludes the proof of the theorem.
\end{proof}
In particular, from the proof above we can retrieve a particular case of \cite{HM}*{pg. 24}:

\begin{cor}\label{cor:T5}
    We have $[\Tcal_5]=8\lambda_1$.
\end{cor}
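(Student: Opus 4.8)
The proof will follow exactly the pattern already established for $\Mcal_3$ in \thmref{thm:pic M3}. First I would invoke \propref{prop:U5 iso P234}, which identifies the open substack $\Ucal_5\subset\Mcal_5$ of non-trigonal curves with the $\GG_m$-torsor $\Pcal_{2,3}^4$ over the moduli stack $\Gcal_{2,3}^4$ of smooth complete intersections of three quadrics in $\PP^4$. Combining this with the description of $\Pic(\Pcal\dmn^n)$ from \parref{par:Pic Pdmn} (namely $\Pic(\Pcal\dmn^n)\simeq\Pic(\Gcal_{d,m}^n)/\langle c_1-s_1\rangle$) and with \thmref{thm:pic Gdmn} applied to $d=2$, $m=3$, $n=4$, one obtains a presentation of $\Pic(\Ucal_5)$ as $\ZZ\langle c_1\rangle/\langle \text{(some integer)}\,c_1\rangle$. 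The relevant integer comes from plugging $d=2$, $m=3$, $n=4$ into the expression $r\dmn(c_1,s_1)$ of \corref{cor:Ddmn} and then imposing $s_1=c_1$; the claim is that this integer works out to $8$, so $\Pic(\Ucal_5)=\ZZ\langle c_1\rangle/\langle 8c_1\rangle$.

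Next I would use the localization (excision) exact sequence for the closed substack $\Tcal_5\subset\Mcal_5$, which has codimension one:
\[ \ZZ\cdot[\Tcal_5]\arr \Pic(\Mcal_5)\arr \Pic(\Ucal_5)\arr 0. \]
Since $\Pic(\Ucal_5)\simeq\ZZ/8\ZZ$ is generated by $c_1$ and $c_1$ lifts to a class in $\Pic(\Mcal_5)$ (it is the first Chern class of a bundle defined on all of $\Mcal_5$, see below), the sequence shows $\Pic(\Mcal_5)$ is generated by that lift of $c_1$, with $[\Tcal_5]$ mapping to $8c_1$ — which gives $[\Tcal_5]=8c_1$ in $\Pic(\Mcal_5)$ once we know $c_1$ has infinite order there. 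To see that $c_1$ is not torsion in $\Pic(\Mcal_5)$, and to identify it with $\lambda_1$, I would trace through the construction: the class $c_1$ is the first Chern class of the $\GL_5$-torsor $U_{2,3}\setminus D_{2,3}\arr \Gcal_{2,3}^4$ from \propref{prop:Gdmn quot}, i.e. of the tautological rank-five bundle $(V\to S,X\subset\PP(V))\mapsto V$. Pulling this torsor back along the composite $\Ucal_5\simeq\Pcal_{2,3}^4\arr\Gcal_{2,3}^4$ of \propref{prop:U5 iso P234}, and recalling from \parref{par:U5'} that under this identification $V\simeq\pi_*\omega_\pi$, we recover precisely the Hodge bundle on $\Ucal_5$, hence $c_1=\lambda_1$ on $\Ucal_5$; since $\lambda_1$ is defined on all of $\Mcal_5$ this identifies the generator. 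Finally, $\lambda_1$ has infinite order in $\Pic(\Mcal_5)$ because this is already known rationally (the rational Picard group is of rank one, generated by $\lambda_1$, as recalled in the introduction via \cite{Mor}), so $\Pic(\Mcal_5)=\ZZ\langle\lambda_1\rangle$ is free.

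**Main obstacle.** The only genuinely computational point is verifying that the integer produced by \corref{cor:Ddmn} with $(d,m,n)=(2,3,4)$, after setting $s_1=c_1$, is exactly $8$; this is a finite binomial-coefficient sum and I expect it to be routine but it is where an arithmetic slip could occur. The only conceptual subtlety is making sure the lift of $c_1$ to $\Pic(\Mcal_5)$ is unambiguous and really equals $\lambda_1$ there rather than merely on $\Ucal_5$ — but since $\lambda_1$ is globally defined and restricts to $c_1$, and the excision sequence is right-exact with $\Pic(\Ucal_5)$ cyclic generated by that restriction, the generator of $\Pic(\Mcal_5)$ can be taken to be $\lambda_1$ itself, so no ambiguity remains. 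As a byproduct, the relation $[\Tcal_5]=8\lambda_1$ drops out immediately, giving \corref{cor:T5}.
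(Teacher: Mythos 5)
Your proposal is correct and follows essentially the same route as the paper: \corref{cor:T5} is extracted there directly from the proof of \thmref{thm:pic M5}, which is exactly the chain \propref{prop:U5 iso P234} $+$ \parref{par:Pic Pdmn} $+$ \thmref{thm:pic Gdmn} giving $\Pic(\Ucal_5)=\ZZ\langle c_1\rangle/\langle 8c_1\rangle$, followed by the localization sequence and the identification $c_1=\lambda_1$ via the Hodge bundle that you describe. Your additional appeal to the rational Picard group to rule out torsion is a reasonable (and slightly more explicit) justification of the step the paper dispatches with ``we easily conclude.''
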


\begin{bibdiv}
	\begin{biblist}
		\bib{AC}{article}{
			author={Arbarello, Enrico},
			author={Cornalba, Maurizio},
			title={The Picard groups of the moduli spaces of curves},
			journal={Topology},
			volume={26},
			date={1987},
			number={2},
			pages={153--171},
			issn={0040-9383},
			review={\MR{895568}},
			doi={10.1016/0040-9383(87)90056-5},
		}
        \bib{Ben}{article}{
           author={Benoist, Olivier},
           title={Degr\'{e}s d'homog\'{e}n\'{e}it\'{e} de l'ensemble des intersections compl\`etes
           singuli\`eres},
           language={French, with English and French summaries},
           journal={Ann. Inst. Fourier (Grenoble)},
           volume={62},
           date={2012},
           number={3},
           pages={1189--1214}
        }
		\bib{Dil18}{article}{
		    author={Di Lorenzo, Andrea},
		    title={The Chow ring of the stack of hyperelliptic curves of odd genus},
		    journal={arXiv:1802.04519 [math.AG]},
		    date={2018}
		}
		\bib{EF08}{article}{
			author={Edidin, Dan},
			author={Fulghesu, Damiano},
			title={The integral Chow ring of the stack of at most 1-nodal rational
				curves},
			journal={Comm. Algebra},
			volume={36},
			date={2008},
			number={2},
			pages={581--594},
			issn={0092-7872},
			review={\MR{2388024}},
			doi={10.1080/00927870701719045},
		}
		\bib{EF09}{article}{
			author={Edidin, Dan},
			author={Fulghesu, Damiano},
			title={The integral Chow ring of the stack of hyperelliptic curves of
				even genus},
			journal={Math. Res. Lett.},
			volume={16},
			date={2009},
			number={1},
			pages={27--40},
			issn={1073-2780},
			review={\MR{2480558}},
			doi={10.4310/MRL.2009.v16.n1.a4},
		}
		\bib{EG}{article}{
			author={Edidin, Dan},
			author={Graham, William},
			title={Equivariant intersection theory},
			journal={Invent. Math.},
			volume={131},
			date={1998},
			number={3},
			pages={595--634},
			issn={0020-9910},
			review={\MR{1614555}},
			doi={10.1007/s002220050214},
		}

		\bib{m3bar}{article}{
			author={Faber, Carel},
			title={Chow rings of moduli spaces of curves. I. The Chow ring of
				$\overline{\scr M}_3$},
			journal={Ann. of Math. (2)},
			volume={132},
			date={1990},
			number={2},
			pages={331--419},
			issn={0003-486X},
			review={\MR{1070600}},
			doi={10.2307/1971525},
		}
	
		\bib{m4}{article}{
			author={Faber, Carel},
			title={Chow rings of moduli spaces of curves. II. Some results on the
			Chow ring of $\overline{\scr M}_4$},
			journal={Ann. of Math. (2)},
			volume={132},
			date={1990},
			number={3},
			pages={421--449},
			issn={0003-486X},
			review={\MR{1078265}},
			doi={10.2307/1971526},
		}

        \bib{Ful}{book}{
           author={Fulton, William},
           title={Intersection theory},
           series={Ergebnisse der Mathematik und ihrer Grenzgebiete. 3. Folge. A
           Series of Modern Surveys in Mathematics [Results in Mathematics and
           Related Areas. 3rd Series. A Series of Modern Surveys in Mathematics]},
           volume={2},
           edition={2},
           publisher={Springer-Verlag, Berlin},
           date={1998},
           pages={xiv+470},
           isbn={3-540-62046-X},
           isbn={0-387-98549-2},
           review={\MR{1644323}},
           doi={10.1007/978-1-4612-1700-8},
        }

		\bib{FulVis}{article}{
			author={Fulghesu, Damiano},
			author={Vistoli, Angelo},
			title={The Chow ring of the stack of smooth plane cubics},
			journal={Michigan Math. J.},
			volume={67},
			date={2018},
			number={1},
			pages={3--29},
			issn={0026-2285},
			review={\MR{3770851}},
			doi={10.1307/mmj/1516330968},
		}
		
    	\bib{Har}{article}{
    		author={Harer, John},
    		title={The second homology group of the mapping class group of an
    			orientable surface},
    		journal={Invent. Math.},
    		volume={72},
    		date={1983},
    		number={2},
    		pages={221--239},
    		issn={0020-9910},
    		review={\MR{700769}},
    		doi={10.1007/BF01389321},
    	}
    	
    	\bib{HM}{article}{
           author={Harris, Joe},
           author={Mumford, David},
           title={On the Kodaira dimension of the moduli space of curves},
           note={With an appendix by William Fulton},
           journal={Invent. Math.},
           volume={67},
           date={1982},
           number={1},
           pages={23--88},
        }
    
    	\bib{Iza}{article}{
    		author={Izadi, E.},
    		title={The Chow ring of the moduli space of curves of genus $5$},
    		conference={
    			title={The moduli space of curves},
    			address={Texel Island},
    			date={1994},
    		},
    		book={
    			series={Progr. Math.},
    			volume={129},
    			publisher={Birkh\"{a}user Boston, Boston, MA},
    		},
    		date={1995},
    		pages={267--304},
    		review={\MR{1363060}},
    	}
    	
        \bib{Kre}{article}{
           author={Kresch, Andrew},
           title={Cycle groups for Artin stacks},
           journal={Invent. Math.},
           volume={138},
           date={1999},
           number={3},
           pages={495--536},
        }

    	\bib{Mor}{article}{
    		author={Moriwaki, Atsushi},
    		title={The $\mathbb{Q}$-Picard group of the moduli space of curves in
    			positive characteristic},
    		journal={Internat. J. Math.},
    		volume={12},
    		date={2001},
    		number={5},
    		pages={519--534},
    		issn={0129-167X},
    		review={\MR{1843864}},
    		doi={10.1142/S0129167X01000964},
    	}
    	
		\bib{Mum63}{article}{
			author={Mumford, David},
			title={Picard groups of moduli problems},
			conference={
				title={Arithmetical Algebraic Geometry},
				address={Proc. Conf. Purdue Univ.},
				date={1963},
			},
			book={
				publisher={Harper \& Row, New York},
			},
			date={1965},
			pages={33--81},
			review={\MR{0201443}},
		}
		
		\bib{Ols}{book}{
           author={Olsson, Martin},
           title={Algebraic spaces and stacks},
           series={American Mathematical Society Colloquium Publications},
           volume={62},
           publisher={American Mathematical Society, Providence, RI},
           date={2016}
        }
    	
    	\bib{PV}{article}{
    		author={Penev, Nikola},
    		author={Vakil, Ravi},
    		title={The Chow ring of the moduli space of curves of genus six},
    		journal={Algebr. Geom.},
    		volume={2},
    		date={2015},
    		number={1},
    		pages={123--136},
    		review={\MR{3322200}},
    		doi={10.14231/AG-2015-006},
    	}
		
        \bib{SGA}{book}{
           label={SGA VII}
           title={Groupes de monodromie en g\'{e}om\'{e}trie alg\'{e}brique. II},
           series={Lecture Notes in Mathematics, Vol. 340},
           note={S\'{e}minaire de G\'{e}om\'{e}trie Alg\'{e}brique du Bois-Marie 1967--1969 (SGA 7
           II);
           Dirig\'{e} par P. Deligne et N. Katz},
           publisher={Springer-Verlag, Berlin-New York},
           date={1973}
        }
        \bib{Tei}{article}{
           author={Teixidor i Bigas, Montserrat},
           title={The divisor of curves with a vanishing theta-null},
           journal={Compositio Math.},
           volume={66},
           date={1988},
           number={1},
           pages={15--22}
        }
    	\bib{Tot}{article}{
			author={Totaro, Burt},
			title={The Chow ring of a classifying space},
			conference={
				title={Algebraic $K$-theory},
				address={Seattle, WA},
				date={1997},
			},
			book={
				series={Proc. Sympos. Pure Math.},
				volume={67},
				publisher={Amer. Math. Soc., Providence, RI},
			},
			date={1999},
			pages={249--281},
			review={\MR{1743244}},
			doi={10.1090/pspum/067/1743244},
		}
	    \bib{Vis89}{article}{
	    	author={Vistoli, Angelo},
	    	title={Intersection theory on algebraic stacks and on their moduli
	    		spaces},
	    	journal={Invent. Math.},
	    	volume={97},
	    	date={1989},
	    	number={3},
	    	pages={613--670},
	    	issn={0020-9910},
	    	review={\MR{1005008}},
	    	doi={10.1007/BF01388892},
	    }

    	\bib{Vis98}{article}{
    		author={Vistoli, Angelo},
    		title={The Chow ring of $\scr M_2$. Appendix to ``Equivariant
    			intersection theory'' [Invent.\ Math.\ {\bf 131} (1998), no.\ 3,
    			595--634; MR1614555 (99j:14003a)] by D. Edidin and W. Graham},
    		journal={Invent. Math.},
    		volume={131},
    		date={1998},
    		number={3},
    		pages={635--644},
    		issn={0020-9910},
    		review={\MR{1614559}},
    		doi={10.1007/s002220050215},
    	}
    	
	\end{biblist}
\end{bibdiv}
\end{document}